\def\ps@pprintTitle{%
 \let\@oddhead\@empty
 \let\@evenhead\@empty
 \def\@oddfoot{\centerline{\thepage}}%
 \let\@evenfoot\@oddfoot}
\newif\ifusetikzandpgfplots
\newif\ifshowlinenumbers
  \pgfplotsset{compat=newest,%
	      every axis plot post/.append style={
		every mark/.append style={scale=0.8,solid},
	      },
	      every axis plot/.append style={thick},
  }
  \tikzset{external/system call={lualatex \tikzexternalcheckshellescape --shell-escape -halt-on-error -interaction=batchmode -jobname "\image" "\texsource"}}
  \tikzset{
    png export/.style={
      external/system call=%
      {lualatex \tikzexternalcheckshellescape --shell-escape -halt-on-error -interaction=batchmode -jobname %
      "\image" "\texsource" && convert -density 600 "\image.pdf" "\image.png"},
    }
  }
  \tikzset{%
    /pgf/images/external info,
    use png/.style={png export,png import},
    png import/.code={%
      \tikzset{%
	/pgf/images/include external/.code={%
	  \includegraphics%
	  [width=\pgfexternalwidth,height=\pgfexternalheight]%
	  {{##1}.png}%
	}%
      }%
    }%
  }
  \tikzset{png export}
\newcommand{\dom}{\Omega}
\newcommand{\vt}[1]{\bm{#1}}
\newcommand{\oB}[1]{\partial\Omega^{#1}}
\newcommand{\plni}[2][l]{p_{#1}^{n,#2}}
\newcommand{\Splni}[1]{S_l(p_l^{n,#1})}
\newcommand{\gradPlniPlusGravity}[1]{\bm{\nabla} \bigl( p_l^{n,#1} + z \bigr)}
\newcommand{\klni}[2][l]{k_#1\bigl(S_#1(p_#1^{n,#2})\bigr)}
\newcommand{\kln}[1]{k_{#1}\bigl(S_{#1}(p_{#1}^{n})\bigr)}
\newcommand{\gli}[2][l]{g_{#1}^{#2}}
\newcommand{\epli}[2][l]{e_{p,#1}^{#2}}
\newcommand{\epliOnGamma}[2][l]{e_{p,#1}^{#2}}
\newcommand{\gradepli}[2][l]{\bm{\nabla} e_{p,#1}^{#2}}
\newcommand{\egli}[2][l]{e_{g,#1}^{#2}}
\newcommand{\flux}[2][l]{\vt{F_{#1}^{n,#2}}}
\newcommand{\Fl}[1]{\vt{F_{#1}}}
\newcommand{\Fln}[2][n]{\vt{F^{#1}_{#2}}}
\newcommand{\spl}{\bigl\langle}
\newcommand{\spr}{\bigr\rangle}
\newcommand{\tspl}{\langle}
\newcommand{\tspr}{\rangle}
\newcommand{\bspl}{\Bigl\langle}
\newcommand{\bspr}{\Bigr\rangle}
\newcommand{\completion}[2]{\overline{#1}{}^{#2}}
\newcommand{\Fs}{\mathcal{V}}
\newcommand{\Tracespace}{H^{1/2}_{00}(\Gamma)}
\newcommand{\tGamma}{{\Gamma}}
\newcommand{\onGamma}[1]{{#1}{|_\Gamma}}
\newcommand{\restrictTo}[2]{{#1}{}{|_{#2}}}
\newcommand{\RR}{\mathbb{R}}
\newcommand{\RRd}{\mathbb{R}^d}
\newcommand{\NN}{\mathbb{N}}
\DeclareMathOperator{\dv}{\bm{\nabla}\cdot }
\providecommand{\norm}[1]{\lVert#1\rVert}
\definecolor{olivedrab}{RGB}{107,142,35}
\newif\ifoptionaltext
\newcommand{\optionaltextcolor}{\color{gray}}
\newif\ifcomments
\newtheorem{theorem}{Theorem}
\newtheorem{lemma}[theorem]{Lemma}
\newdefinition{notation}{Notation}
\newdefinition{problem}{Problem}
\newdefinition{assumptions}{Assumptions}
\newdefinition{remark}{Remark}
\begin{document}
\begin{frontmatter}
\title{A linear domain decomposition method for partially saturated flow in porous media}


\author[davidaddress]{David Seus\corref{correspondingAuthor}}
\cortext[correspondingAuthor]{Corresponding author:}
\ead{david.seus@ians.uni-stuttgart.de}
\address[davidaddress]{
  Institute of Applied Analysis and Numerical Simulation, 
  Chair of Applied Mathematics, 
  Pfaffenwaldring 57, 
  70569 Stuttgart, 
  Germany
}


\author[koondimainaddress,sorinaddress]{Koondanibha Mitra}

\address[koondimainaddress]{
  Department of Mathematics and Computer Science, 
  Technische Universiteit Eindhoven, 
  PO Box 513, 
  5600 MB  Eindhoven, 
  The Netherlands
}

\author[sorinaddress,florinaddress]{Iuliu Sorin Pop}
\address[sorinaddress]{
  Faculty of Sciences, 
  Hasselt University, 
  Campus Diepenbeek, 
  Agoralaan Building D, 
  BE3590 Diepenbeek, 
  Belgium
}
\author[florinaddress]{Florin Adrian Radu}
\address[florinaddress]{
  Department of Mathematics, 
  University of Bergen, 
  P. O. Box 7800, 
  N-5020 Bergen, 
  Norway
}
\author[davidaddress]{Christian Rohde}

\begin{abstract}
  The Richards equation is a nonlinear parabolic equation that is commonly used for modelling saturated/unsaturated flow in porous media. We assume that the medium occupies a bounded Lipschitz domain partitioned into two disjoint subdomains separated by a fixed interface $\Gamma$. 
  This leads to two problems defined on the subdomains which are coupled through conditions expressing flux and pressure continuity at $\Gamma$. After an Euler implicit discretisation of the resulting nonlinear subproblems a linear iterative ($L$-type) domain decomposition scheme is proposed. The convergence of the scheme is proved rigorously. 
  In the last part we present numerical results that are in line with the theoretical finding, in particular the unconditional convergence of the scheme. We further compare the scheme to other approaches not making use of a domain decomposition. 
  Namely, we compare to a Newton and a Picard scheme. We show that the proposed scheme is more stable than the Newton scheme 
  while remaining comparable in computational time, even if no parallelisation is being adopted. 
  Finally we present a parametric study that can be used to optimize the proposed scheme.
\end{abstract}

\begin{keyword}
  Domain decomposition 
  \sep $L$-scheme Linearisation 
  \sep Richards Equation
\end{keyword}

\end{frontmatter}

\ifshowlinenumbers
  \linenumbers
\fi

\section{Introduction}
Unsaturated flow processes through porous media appear in a variety of physical situations and applications. 
Notable examples are soil remediation, enhanced oil recovery, $CO_2$ storage, harvesting of geothermal energy, or the design of filters and fuel cells. 
Mathematical modelling and numerical simulation are essential for understanding such processes, since measurements and experiments are very difficult if not impossible, and hence only limitedly
available. 
The associated mathematical and computational challenges are manifold. The mathematical models are usually coupled systems
of nonlinear partial differential equations and ordinary ones, involving largely varying physical properties and parameters, like porosity,
permeability or soil composition. Together with the large scale and possible complexity of the domain, this poses significant computational challenges, making the design and analysis of robust discretisation methods a non-trivial task.

In this work we focus on saturated/unsaturated flow of one fluid (water) in a porous medium (e.g. the subsurface) occupying the domain $\dom \subset \RRd$ ($d \in \{1, 2, 3\}$). Besides water, a second phase (air) is present,  which is assumed to be at a constant (atmospheric) pressure. This situation is described by the Richards equation, here in  pressure formulation
\begin{align}
  \Phi\partial_t S(p) - \nabla\cdot \left[ \frac{\vt{K}}{\mu}k_r\bigl( S(p) \bigr) \bm{\nabla}\bigl( p +
z\bigr) \right]  &= 0, \label{RichardsOriginal}
\end{align}
see e.g. \cite{Helmig97}, originally \cite{Richardson1922,Richards1931}. In the above $\Phi$ denotes the porosity, $S$ is the water saturation, $p$ is
the water pressure, $k_r$ is
the relative permeability, $\vt{K}$ the intrinsic permeability and $z = -\rho_wg x_3$ is the
gravitational term in direction of the $x_3$-axis. Finally, $g$ is the gravitational acceleration, $\rho_w$ the water density and $\mu$ its viscosity. With $T > 0$ being a maximal time, the equation is defined for the time $t \in (0, T)$ on the bounded Lipschitz domain $\dom$. 

Below we propose a domain decomposition (DD) scheme for the numerical solution of \eqref{RichardsOriginal}. 
To this aim we assume that $\dom$ is partitioned into two subdomains $\dom_l$ ($l \in \{1, 2\} $) separated by a Lipschitz-continuous interface $\Gamma$, see Fig. \ref{dddrawing}. 
In other words one has $\dom = {\dom_1} \cup {\dom_2} \cup \Gamma$. The restriction to two subdomains is made for the ease of presentation, but the scheme can be extended straightforwardly to more subdomains. 
In each $\dom_l$ ($l \in \{1, 2\}$) we use the physical pressure $p_l$ as primary variable. 
Furthermore, the permeability and porosity in each of the subdomains may be different and even discontinuous, which is the case of a heterogeneous medium consisting of block-type heterogeneity (like a fractured medium).

In view of its relevance for manifold applications in real life, Richards equation has been studied extensively, both analytically and numerically, and the dedicated literature is extremely rich. 
We restrict ourselves here by mentioning \cite{AltandLuckhaus,AltLuckhausVisintin1984} for the existence of weak solutions and \cite{Otto1996} for the uniqueness.  
Numerical schemes for the Richards equation, or in general for degenerate parabolic equations, are analysed in
\cite{Arbogast1996,Nochetto1988,Pop2002,Radu2004,Radu2008,Radu2017,Klausen2008,Yotov1997,Woodward2000}. 
Most of the papers are considering the backward Euler method for the time discretisation in view of the low regularity of the solution, see \cite{AltandLuckhaus}, and to avoid restrictions on the time step size.

Different approaches with regard to spatial discretisation have been considered. Galerkin finite elements were used in
\cite{Nochetto1988,Slodicka2002,Berninger2011}. Discontinuous Galerkin finite element schemes for flows through (heterogeneous) porous media have been studied in \cite{Bastian2007,Epshteyn}.
Finite volume schemes including multipoint flux approximation ones for the Richards equation are analysed in \cite{Eymard1999,Eymard2006,Klausen2008}, and mixed finite elements in \cite{Arbogast1996,Bause2004,Radu2004,Radu2008,Radu2017,Woodward2000,Yotov1997}. Such schemes are locally mass conservative.

Applying the Kirchhoff transformation \cite{AltandLuckhaus} brings the mathematical model to a form that simplifies  mathematical and numerical analysis, see e.g.
\cite{Nochetto1988,Arbogast1996,Radu2004,Radu2008}. 
However, the transformed unknown is not directly related to a physical quantity like the pressure, and therefore a postprocessing step is required after a numerical approximation of the solution has been obtained. 
Alternatively, one may develop numerical schemes for the original formulation and in terms of the physical quantities. 
Nevertheless, when proving the convergence rigorously, one often resorts to a Kirchhoff transformed formulation as intermediate step. Alternatively, sufficient regularity of the solution, e.g. by avoiding cases where the medium is completely saturated, or completely dry, has to be assumed. 
We point out that in this work we will not make use of the Kirchhoff transformation, keeping the equation in a more relevant form for applications.
%
%

If implicit methods are adopted for the time discretisation, the (elliptic or fully discrete) problems obtained at each time step are nonlinear. For solving these, different
approaches have been proposed. 
Examples are the Newton method  \cite{Bergamaschi1999,Park1995,Brenner}, the Picard/modified Picard method \cite{Celia1990,Lott2012}, or the J\"ager-Kacur method \cite{JaegerKacur1995,Kacur1999}. 
We refer to \cite{Radu2006} for the convergence analysis of such nonlinear schemes. 
Assuming that the initial guess is the solution from the previous time step, the convergence of such schemes can only be guaranteed under severe restriction for the time step in terms of the mesh size. 
Additionally, 
regularizing the problem is required, which prevents the Jacobian from becoming singular.
Such difficulties do not appear when the L-scheme is being used, which is a fixed point scheme transforming the iteration into a contraction, \cite{Pop2004365,Radu2015134,Slodicka2002}. 
The convergence is merely linear
but in a better norm ($H^1$) and requires no regularization or  severe constraint on the time step. We also refer to \cite{List2016} for a combination of the Newton method and the L-scheme. 
Moreover, we mention \cite{Radu2017} for the application of the L-scheme to H\"older instead of Lipschitz continuous nonlinearities.

Independent of the chosen discretisation method and of the linearisation scheme, domain decomposition (DD) methods offer an efficient way to reduce the computational complexity of the problem, and to perform calculations in parallel. 
This is in particular interesting whenever domains with block type heterogeneities are considered, as DD schemes allow decoupling the models defined in different homogeneous subdomains and solving these numerically in parallel. 
We refer to \cite{QuarteroniValli2005} for a detailed discussion of linear DD methods and to \cite{Dolean} for a general introduction into the subject. 
Comprehensive studies of nonlinear DD schemes in the field of fluid dynamics can be found in \cite{Glowinski,Dryja,Tai}. For articles strictly related to porous media flow models, we refer to \cite{Skogestad2013,Skogestad2016} for an overview of different overlapping domain decomposition strategies. 
Linear and nonlinear additive Schwartz methods are compared, and the use of such methods as linear and nonlinear preconditioners is discussed. 
Regardless of the type of the DD scheme, choosing the optimal parameters is a key issue. Such aspects are 
analysed e.g. in \cite{GanderDubois,GanderHalpern}. 
We also refer to \cite{Sarah} for a DD algorithm for porous media flow models, where a-posteriori estimates are used to optimize the parameters and the number of iterations.

Recall that the Richards equation is a nonlinear evolution equation. 
For solving this type of equation, methods like parareal \cite{Gander1} and wave-form relaxation \cite{GanderRohde,GanderKwok} have been proposed. 
The main ideas there are to decompose the problem into separate problems defined in time/space-time domains.
DD methods for the Richards equation are discussed in \cite{Berninger2010,Berninger2015}. 
In these papers the domain is decomposed into multiple layers and the Richards models restricted to adjacent layers are coupled by Robin type boundary conditions. 
The approach uses nonoverlapping domain-decomposition and generalises the ideas of the method introduced in \cite{Lions1988} for linear elliptic problems (see also \cite{QinXu2006,Hoang}), leading to decoupled, nonlinear problems in the subdomains.

Here we consider a linear DD scheme for the numerical approximation of the time discrete problems obtained after substructuring into subproblems and performing an Euler implicit time stepping. 
A nonoverlapping DD scheme (referred to henceforth as LDD scheme) inspired by the DD method introduced in \cite{Lions1988} is defined. 
The LDD iterations are linear, based on an L-type scheme.
This approach differs from the one commonly used when dealing with nonlinear elliptic problems in the context of DD. 
In most cases, the DD iterations lead to nonlinear subproblems. 
For solving these, iterative methods in each subdomain are applied. 
In our approach, the linearisation step is part of the DD iterations, which reduces the computational time.
More precisely, the L-scheme idea is combined with the nonoverlapping DD scheme such that the equations defined in each subdomain along with the Robin type coupling conditions on the interface become  linear. 
For the resulting scheme we prove rigorously the unconditional convergence, and provide numerical examples supporting the theoretical findings and demonstrating its effectiveness.

The paper is structured as follows. In Sec. \ref{SectionProblemFormulationAndIterationScheme} we present the mathematical model and introduce the DD scheme.  
Section \ref{SectionAnalysisOfScheme} contains the analysis of the scheme. 
Finally, Sec. \ref{SectionNumericalExperiments} provides numerical experiments in two spatial dimensions, together with an analysis of the practical performance of the scheme. 
This includes a comprehensive comparison (including robustness and efficiency) between the proposed DD scheme and standard monolithic schemes based on Newton, modified Picard as well as the L-scheme.

\begin{figure}[htbp]
  \centering
  \includegraphics[width=0.7\textwidth]{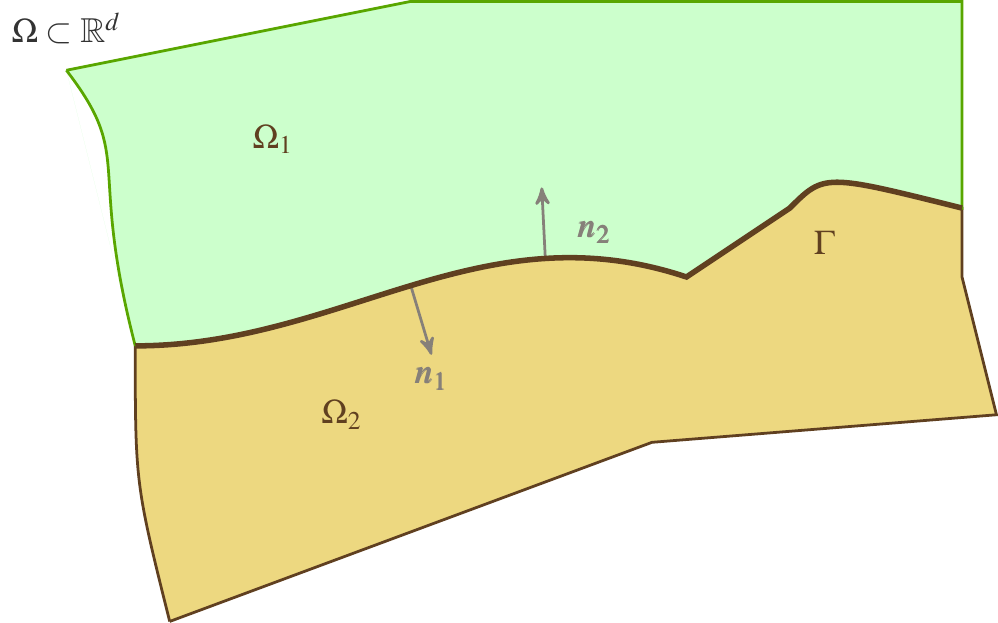}
  \caption{Illustration of the domain $\dom = \dom_1 \cup \,\dom_2 $ $\subset \RRd$ with fixed interface
  $\Gamma$. Also shown are the normal vectors along the interface. \label{dddrawing} }
\end{figure}

\section{Problem formulation and iterative scheme} \label{SectionProblemFormulationAndIterationScheme}
\subsection{Problem formulation}
Recall that $T > 0$ and $\dom \subset \RRd$ is a bounded Lipschitz domain partitioned in two subdomains $\dom_{1, 2}$, separated by the Lipschitz-continuous interface
$\Gamma$. 
The boundary of $\dom$ is denoted by $\partial \dom$  and the portions of $\partial \dom$ that are also boundaries of $\dom_l$ are denoted by $\partial\dom_l$ (see also Fig. \ref{dddrawing}). 
To ease the presentation, the two subdomains are assumed to be homogeneous and isotropic, i.e. we can have two different relative permeabilities $k_r = k_{r,l}$ on each  $\dom_l$, the intrinsic permeabilities $\vt{K} = K_l$ are scalar and the two porosities $\Phi_l$ ($l = 1, 2$) are constant. The product $\frac{K_l\, k_{r,l}}{\Phi_l\mu_l}$ in \eqref{RichardsOriginal} is abbreviated by $k_l$ henceforth.
We solve equation \eqref{RichardsOriginal} in $\Omega$ together with initial and homogeneous Dirichlet boundary conditions. We refer to \cite{Berninger2010,Schweizer} for more general conditions, including outflow-type ones. 

On the two subdomains, the problem  transforms into two subproblems, coupled through two conditions at the interface $\Gamma$: the continuity of the normal fluxes and the continuity of the pressures. 
 With the fluxes
$ \Fl{l} := -k_l\bigl(S_l(p_l)\bigr)\bm{ \bm{\nabla} \bigl( } p_l + z \bigr)$, \eqref{RichardsOriginal} becomes
\begin{align}
  \partial_t S_l(p_l) + \bm{\nabla}\cdot \Fl{l}
    &= 0
      && \mbox{in } \dom_l\times(0,T], \label{ClassicalFormulation1}\\
  \Fl{1} \cdot \vt{n_1}
    &= -\Fl{2}\cdot \vt{n_2}
      && \mbox{on } \Gamma\times[0,T], \label{ClassicalFormulation2}  \\
  p_1 &= p_2  && \mbox{on } \Gamma\times(0,T], \label{ClassicalFormulation3} \\
  p_l &= 0  && \mbox{on } \partial\dom_l\times(0,T]. \label{ClassicalFormulation4}
\end{align}
This is closed by the initial conditions $p_l(\cdot,0):= p_{l,0}$ in $\dom_l$, where $p_l$ is the water pressure on $\dom_l$, $l=1,2$, and $k_{l}$ are (given) scaled relative permeability functions, that are assumed to be smooth enough. In the above, $\vt{n_l}$ stands for the outer unit normal vector at $\partial\dom_l$.
\subsubsection*{Semi-discrete formulation (discretisation in time)}
For the time discretisation we let $N \in \mathbb{N}$ be a given and $\tau  := \tfrac{T}{N}$ be the corresponding time step. 
Then $p_l^n$ is the approximation of the pressure $p_l$ at time $t^n = n \tau$. The Euler implicit discretisation of
(\ref{ClassicalFormulation1}) -- (\ref{ClassicalFormulation4}) reads
\begin{align}
  S_l\bigl(p_l^n\bigr) -S_l\bigl(p_l^{n-1}\bigr) + \tau \bm{\nabla} \cdot \Fln{l}
    &= 0
      && \hspace{-3mm}\mbox{in } \dom_l, \label{SemiDiscreteFormulation1}\\
  \Fln{1}\cdot \vt{n_1}
    &= -\Fln{2}\cdot \vt{n_2}
      && \hspace{-3mm}\mbox{on } \Gamma, \label{SemiDiscreteFormulation2}  \\
  p_1^n &= p_2^n  && \hspace{-3mm}\mbox{on } \Gamma, \label{SemiDiscreteFormulation3} \\
  p_l^n &= 0  && \hspace{-3mm}\mbox{on } \partial\dom^l,   \label{SemiDiscreteFormulation4}
\end{align}
where $\Fln{l}:= k_l\bigl(S_l(p_l^n)\bigr)\bm{\nabla} \bigl( p_l^n +
z \bigr) $ is the flux at time step $t^n$. Observe that (\ref{SemiDiscreteFormulation2}) and (\ref{SemiDiscreteFormulation3}) are the coupling conditions at the interface $\Gamma$.

\subsection{The LDD iterative scheme} \label{SubsectionIterationScheme}
If $\bigl(p_1^{n-1},p_2^{n-1}\bigr)$ is known, $\bigl(p_1^{n},p_2^{n}\bigr)$ can be obtained by solving the nonlinear system (\ref{SemiDiscreteFormulation1})--(\ref{SemiDiscreteFormulation4}). 
To this end, we define an iterative scheme that
uses Robin type conditions at $\Gamma$ to decouple the subproblems in $\dom_l$, and linearises the terms due to the saturation-pressure dependency by adding stabilisation terms that cancel each other in the limit (see e.g. \cite{List2016,Pop2004365}). 
Specifically, assuming  that for some $i\in\NN$ the approximations $\plni{i-1}$ and
$\gli{i-1}$ are known, we seek $\bigl(\plni[1]{i},\plni[2]{i}\bigr)$ 
solving the problems
\begin{align}
  L_l \plni{i} - L_l \plni{i-1}
   + \tau\bm{\nabla}\cdot\flux{i} 
  &= - S_l\bigl(p_l^{n,i-1}\bigr) + S_l\bigl(p_l^{n-1}\bigr)  %
   &&\mbox{ in } \dom_l ,
  \label{strongiterschemeeqn1}  \\
  \flux{i} \cdot \vt{n_l} &= g^i_l + \lambda p_l^{n,i}
   &&\mbox{ on } \Gamma\times[0,T],
  \label{strongiterschemeeqn2} \\[0.9mm]
   g_l^i :\hspace{-1mm} &= -2\lambda p_{3-l}^{n,i-1}-
  g_{3-l}^{i-1}.   \label{stronggliupdate}
\end{align}
Following the previously introduced notation, $\flux{i} :=- k_l\bigl(S_l(p_l^{n,i-1})\bigr) \bm{\nabla} \bigl( p_l^{n,i} + z \bigr) $
denotes the linearised flux at iteration $i$. By $\lambda \in(0,\infty)$, we denote
 a (free to be chosen) parameter used to weight the influence of the pressure on the interface conditions at $\Gamma$. 
The parameters $L_l > 0$ must adhere to some mild constraints in order for the scheme to converge, which will be discussed later, but other than that, are arbitrary. 
The iteration starts with
$$\plni{0}:= p_l^{n-1}, \qquad \text{ and } \qquad \gli{0}:=  \Fln[n-1]{l}\cdot\vt{n_l} -
\lambda p_l^{n-1}, $$
and clearly, the difference $L_l \plni{i} - L_l \plni{i-1}$ is vanishing in case of convergence.

%
\begin{remark}\label{rem:idea}
  The usage of the terms $\gli{i}$ and of the parameter $\lambda$ is motivated by the following. With the notation $f_l^n := \Fln{l} \cdot
  \vt{n_l}$, the transmission conditions \eqref{SemiDiscreteFormulation2}-\eqref{SemiDiscreteFormulation3} become $f_1^n = -f_2^n$ and $p_1^n = p_2^n$. For any $\lambda \neq 0$, these are equivalent to
  \begin{equation}
    \begin{aligned}
      f_1^n & = (-f_2^n  - \lambda p_2^n) + \lambda p_1^n, \\
      f_2^n & = (-f_1^n  - \lambda p_1^n) + \lambda p_2^n.
    \end{aligned}\label{algorithmexplanation2}
  \end{equation}
  Denoting the terms between brackets by $g_l$, one obtains
  \begin{equation}
    \begin{aligned}
      f_1^n &= g_1 +  \lambda p_1^n, \\
      f_2^n &= g_2 +  \lambda p_2^n,
    \end{aligned} \quad \text{ and } \quad
    \begin{aligned}
      g_1 &= -2\lambda p_2^n - g_2, \\
      g_2 &= -2\lambda p_1^n - g_1 .
    \end{aligned} \label{motivation_coupling}
  \end{equation}
The conditions in \eqref{strongiterschemeeqn2}-\eqref{stronggliupdate} are the linearised counterparts of \eqref{motivation_coupling}.
\end{remark}
%
\begin{remark}[different decoupling formulations] \label{RemarkFormulationVariants}~
  The decoupled conditions in \eqref{SemiDiscreteFormulation2}-\eqref{SemiDiscreteFormulation3} can be formulated as convex combinations of the terms $g$ and $p$, namely
  \begin{align*}
    \flux{i} \cdot \vt{n_l} &= (1-\lambda) g^i_l +
    \lambda p_l^{n,i}
    \tag{\ref{strongiterschemeeqn2}'} \label{strongiterschemeeqn2'} \\[0.9mm]
    (1-\lambda) g_l^i :\hspace{-1mm} &= -2\lambda p_{3-l}^{n,i-1}- (1-\lambda)
    g_{3-l}^{i-1}. && \tag{\ref{stronggliupdate}'} \label{stronggliupdate'}
  \end{align*}
  The convergence analysis below can be carried out for this formulation without any difficulty. 
  However, the DD scheme using this convex formulation showed a slower convergence in the numerical experiments than when  (\ref{strongiterschemeeqn2})-(\ref{stronggliupdate}) was used. 
  Moreover, it is easier to find close to optimal parameters for the latter. 
  Such aspects are discussed in Section \ref{SectionNumericalExperiments}. In view of this, in what follows we restrict the analysis to the initial formulation.
\end{remark}
Before formulating the main result we specify the notation that will be used below.

%
\begin{notation}
  $L^2(\dom)$ is the space of Lebesgue
  measurable,  square integrable  functions over $\dom$.  $H^1(\dom)$ contains functions in $L^2(\dom)$ having also weak derivatives in $L^2(\dom)$.
  $H_0^1(\dom) = \completion{C_0^\infty(\dom)}{H^1}$, where the completion is with respect to the standard
  $H^1$ norm and $C_0^\infty(\dom)$ is the space of smooth functions with compact support in
  $\dom$. The definition for $H^1(\dom_l)$ ($l = 1, 2$) is similar. With $\Gamma$ being a $(d-1)$ dimensional manifold in $\bar{\dom}$, $H^{\frac{1}{2}}(\Gamma)$ contains the traces of $H^1$ functions on $\Gamma$ (see e.g. \cite{Brezzi1991,MacLean2000,QuarteroniValli2005}. Given $u \in H^1(\dom)$, by its trace on $\Gamma$ is denoted by $\restrictTo{u}{\Gamma}$.

 Furthermore, the following spaces will be used 
%
  \begin{align}
    \Fs_l&:= 
    \left\{ u \in H^1(\dom_l)\, \bigl|\bigr. \, \restrictTo{u}{\oB{l}}
    \equiv 0 \right\}, \\%
    %
    \Fs   &:= \left\{ (u_1,u_2) \in \Fs_1\times \Fs_2 \, \bigl|\bigr. \, {u_1}_{|_\Gamma} \equiv {u_2}_{|_\Gamma}
    \right\},\\
    \Tracespace &\hspace{3.1pt}= \bigl\{ \nu \in H^{1/2}(\Gamma) \,\bigl|\bigr.\, \nu = \onGamma{w} \mbox{ for a } w \in
    H_0^1(\dom) \bigr\}.
  \end{align}
  Note, that $\Fs = H^1_0(\dom)$. 
  $\Tracespace'$ denotes the dual space of $\Tracespace$. $\langle \cdot, \cdot \rangle_X$ will denote the $L^2(X)$ scalar product, with $X$ being one of the sets $\dom$, $\dom_l$ ($l = 1, 2$) or $\Gamma$. Whenever self understood, the notation of the domain of integration $X$ will be dropped. Furthermore, $\spl \cdot , \cdot \spr_\Gamma$ stands also for the duality pairing between $\Tracespace'$ and $\Tracespace$.
\end{notation}

In what follows we make the following
\begin{assumptions} \label{assumptionsondata} With $l=1,2$, we assume that%
%
 \begin{enumerate}[itemsep=0.5ex,label=\alph*)]
  \item $k_l:\RR \rightarrow [0,1]$ are strictly mo\-notonically increasing and
  Lipschitz continuous functions with Lipschitz constants $L_{k_l} > 0$,
  \item there exists  $m \in \RR$ such that $ 0 <  m \leq k_1(S)$, $k_2(S)$ for all $S \in \RR$,
  \item $S_l: \RR \rightarrow \RR$ are monotonically increasing and Lipschitz continuous functions with Lipschitz constants
  $L_{S_l}>0$.
 \end{enumerate}
\end{assumptions}
For later use we define $L_k:=\max\{$ $L_{k_1}, L_{k_2}\}$ and $L_S:=\max\{L_{S_1}, L_{S_2}\}$.

In a simplified formulation, the main result in this paper is
\begin{theorem} \label{strongmainresult}
  Assume there exists a solution pair $(p_1^n,p_2^n) $ to
  (\ref{SemiDiscreteFormulation1})--(\ref{SemiDiscreteFormulation4}) that additionally fulfils
  $\sup_l\norm{\bm{\nabla} \bigl( p_l^n + z \bigr) }_{L^\infty} \leq M < \infty $. %
  Let $L_l$ obey $L_{S_l} < 2L_{l}$ for $l=1,2$ and assume that the time step
  $\tau > 0$ is chosen small enough, so that for both $l$ one has
  \begin{align}
  \tau <   \frac{2m}{L_{k_l}^2M^2}\left(\frac{1}{L_{S_l}} -\frac{1}{2L_{l}}\right).
  \end{align}
 Then the sequence of solution pairs $\bigl\{(p_1^{n,i}, p_2^{n,i})\bigr\}_{i \ge 1}$ of
  (\ref{strongiterschemeeqn1})--(\ref{strongiterschemeeqn2}) converges to $(p_1^n,p_2^n) $.
  \end{theorem}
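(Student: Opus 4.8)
The plan is to follow the classical energy/Lyapunov argument for $L$-type linearisations, combined with a Lions-type telescoping identity across the interface. First I would pass to the weak formulation of the iteration (\ref{strongiterschemeeqn1})--(\ref{stronggliupdate}) and of the semi-discrete problem (\ref{SemiDiscreteFormulation1})--(\ref{SemiDiscreteFormulation4}), testing in $\Fs_l$, so that the Robin condition (\ref{strongiterschemeeqn2}) enters as a boundary term on $\Gamma$ while the homogeneous Dirichlet condition annihilates the boundary integral on $\oB{l}$. A preliminary remark would record that each linear subproblem is well posed by Lax--Milgram, coercivity coming from the $L_l$-term together with the $\lambda$-weighted Robin term and the lower bound $m$ on $k_l$. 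Introducing the errors $\epli{i}:=p_l^{n,i}-p_l^n$ and $\egli{i}:=g_l^i-g_l$, with $g_l$ the exact interface datum from \eqref{motivation_coupling}, subtracting the exact equations from the iterates yields in each $\dom_l$ the identity $L_l(\epli{i}-\epli{i-1})+\tau\,\dv(\flux{i}-\Fln{l})=-\bigl(S_l(p_l^{n,i-1})-S_l(p_l^n)\bigr)$, together with $(\flux{i}-\Fln{l})\cdot\vt{n_l}=\egli{i}+\lambda\,\epli{i}$ on $\Gamma$ and the update $\egli{i}=-2\lambda\,\epli[3-l]{i-1}-\egli[3-l]{i-1}$.

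Next I would test the volume identity with $\epli{i}$. Writing $\flux{i}-\Fln{l}=-k_l\bigl(S_l(p_l^{n,i-1})\bigr)\,$$\gradepli{i}$$-\bigl(k_l(S_l(p_l^{n,i-1}))-k_l(S_l(p_l^n))\bigr)\bm{\nabla}\bigl(p_l^n+z\bigr)$ separates a coercive diffusion part, bounded below by $\tau m\,\norm{\gradepli{i}}^2$ via Assumption (b), from a perturbation controlled by the $L^\infty$-bound $M$ and the Lipschitz constants. The time-difference term produces, through $2\langle a-b,a\rangle=\norm{a}^2-\norm{b}^2+\norm{a-b}^2$, the telescoping pair $\tfrac{L_l}{2}\bigl(\norm{\epli{i}}^2-\norm{\epli{i-1}}^2\bigr)$ plus a surplus $\tfrac{L_l}{2}\norm{\epli{i}-\epli{i-1}}^2$. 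The saturation term splits as $\langle S_l(p_l^{n,i-1})-S_l(p_l^n),\epli{i-1}\rangle+\langle S_l(p_l^{n,i-1})-S_l(p_l^n),\epli{i}-\epli{i-1}\rangle$; monotonicity-plus-Lipschitz makes the first piece dominate $\tfrac{1}{L_{S_l}}\norm{S_l(p_l^{n,i-1})-S_l(p_l^n)}^2$, while Young's inequality on the second (weighted by $L_l$) is exactly absorbed by the surplus. Here the constraint $L_{S_l}<2L_l$ forces the coefficient $\tfrac{1}{L_{S_l}}-\tfrac{1}{2L_l}$ to be positive, and the stated time-step bound forces the perturbation contribution $\tfrac{\tau M^2 L_{k_l}^2}{2m}\norm{S_l(p_l^{n,i-1})-S_l(p_l^n)}^2$ (obtained by Young against half the diffusion and $|k_l(S_l(a))-k_l(S_l(b))|\le L_{k_l}|S_l(a)-S_l(b)|$) to be strictly dominated, leaving a nonnegative residual $c_l\,\norm{S_l(p_l^{n,i-1})-S_l(p_l^n)}^2$ with $c_l>0$.

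The crux, and the step I expect to be the main obstacle, is the interface. After the volume estimates, each $l$ leaves on the left-hand side $\tau\langle\egli{i}+\lambda\epli{i},\epli{i}\rangle_\Gamma=\tau\langle\egli{i},\epli{i}\rangle_\Gamma+\tau\lambda\norm{\epli{i}}_\Gamma^2$. The key algebraic observation is that, setting $\Theta_l^i:=\egli{i}+2\lambda\epli{i}$, one has $\langle\egli{i},\epli{i}\rangle_\Gamma+\lambda\norm{\epli{i}}_\Gamma^2=\tfrac{1}{4\lambda}\bigl(\norm{\Theta_l^i}_\Gamma^2-\norm{\egli{i}}_\Gamma^2\bigr)$, whereas the update (\ref{stronggliupdate}) is precisely $\egli{i}=\Theta_{3-l}^{i-1}$, so $\norm{\egli{i}}_\Gamma^2=\norm{\Theta_{3-l}^{i-1}}_\Gamma^2$. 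Summing over $l=1,2$, the cross contributions telescope: $\sum_l\bigl(\norm{\Theta_l^i}_\Gamma^2-\norm{\Theta_{3-l}^{i-1}}_\Gamma^2\bigr)=\sum_l\norm{\Theta_l^i}_\Gamma^2-\sum_l\norm{\Theta_l^{i-1}}_\Gamma^2$. This is the linearised analogue of the Lions interface identity, and it is exactly where the form of the coupling is essential. The genuine difficulty is the functional setting on $\Gamma$: the $\egli{i}$ are a priori flux-type objects, so I must justify that the interface data and traces lie in $L^2(\Gamma)$ — propagating regularity through the update from a sufficiently regular $\gli{0}$ — so that the norms $\norm{\cdot}_\Gamma$ and the pairing $\langle\cdot,\cdot\rangle_\Gamma$ are meaningful and the identity rigorous.

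Finally I would assemble the Lyapunov functional $\mathcal{A}^i:=\sum_l\tfrac{L_l}{2}\norm{\epli{i}}^2+\tfrac{\tau}{4\lambda}\sum_l\norm{\Theta_l^i}_\Gamma^2\ge0$ and conclude from the summed estimate that $\mathcal{A}^i+\sum_l\bigl(\tfrac{\tau m}{2}\norm{\gradepli{i}}^2+c_l\norm{S_l(p_l^{n,i-1})-S_l(p_l^n)}^2\bigr)\le\mathcal{A}^{i-1}$. Hence $\{\mathcal{A}^i\}$ is nonincreasing and bounded below, so its increments are summable, forcing $\norm{\gradepli{i}}\to0$ for both $l$. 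Since $\epli{i}\in\Fs_l$ vanishes on $\oB{l}$ (of positive measure), Poincaré's inequality upgrades this to $\norm{\epli{i}}_{H^1(\dom_l)}\to0$, that is $p_l^{n,i}\to p_l^n$ in $H^1(\dom_l)$ for $l=1,2$, which is the claimed convergence.
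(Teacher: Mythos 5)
Your proposal is correct and follows essentially the same route as the paper: the same $L$-scheme energy identity, the same splitting and Young estimates for the saturation and permeability perturbation terms (with the identical choice of weights yielding the stated time-step condition), and your $\Theta_l^i:=\egli{i}+2\lambda\epli{i}$ identity is exactly the paper's interface manipulation in (\ref{boundarytrickpre})--(\ref{boundarytrick}), just repackaged, followed by the same telescoping sum and Poincar\'e argument. The regularity concern you flag on $\Gamma$ is handled in the paper only by interpreting $\spl\cdot,\cdot\spr_\Gamma$ as the $\Tracespace'$--$\Tracespace$ pairing, so you are not missing anything the paper supplies.
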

\begin{remark}
  The precise form of Theorem \ref{strongmainresult} will be formulated in Section \ref{SectionAnalysisOfScheme}, after having defined a weak solution. The theorem above is given for the ease of presentation.
\end{remark}

\section{Analysis of the scheme.} \label{SectionAnalysisOfScheme}

This section gives the convergence proof for the proposed scheme. 
%
The starting point is the Euler implicit discretisation in Section \ref{SectionProblemFormulationAndIterationScheme}. Assuming
$\bigl(p_1^{n-1},p_2^{n-1}\bigr) \in \Fs$ to be known, a weak formulation of (\ref{SemiDiscreteFormulation1})--(\ref{SemiDiscreteFormulation4})
is given by
\begin{problem}[Semi-discrete weak formulation]~ \label{WeakSemiDiscreteFormulation}
  Find $(p_1^n$, $p_2^n)\in \Fs$ such that $\Fln{l}\cdot
  \vt{n_l} \in \Tracespace' $ for $l=1,2$ and
  \begin{align}
    \spl S_l(p_l^n),\varphi_l \spr - \tau\spl \Fln{l},	\bm{\nabla} \varphi_l\spr
    + \tau\spl \Fln{3-l}&\cdot
  \vt{n_l},\onGamma{\varphi_l} \spr_\Gamma 
    = \spl S_1(p_1^{n-1}),\varphi_1 \spr, \label{weakformulationequation1}
  \end{align}
  for all $(\varphi_1,\varphi_2) \in \Fs$.
\end{problem}

\begin{remark}
If $(p_1^n,p_2^n)\in \Fs$ is a solution of Problem \ref{WeakSemiDiscreteFormulation},  we have
    $\onGamma{p_1^n} =
    \onGamma{p_2^n}$ by definition of $\Fs$.
    Testing in (\ref{weakformulationequation1}) by an arbitrary $\varphi_l \in C_0^\infty(\dom_l)$ shows that the distribution $\dv\Fln{l}$ is regular and in $L^2$, yielding $ \Fln{l} \in H(\mbox{div},\dom_l)$ and
  \begin{align}
    S_l(p_l^n) - S_l(p_l^{n-1}) &= -\tau\dv \Fln{l}
  \qquad \mbox{a. e. in } \dom_l
    \label{fluxIsInHdiv}
  \end{align}
  by the variational lemma.  By Lemma III. 1.1 in \cite{Brezzi1991}, $\Fln{l} \cdot \vt{n_l}
  \in H^{-1/2}(\partial\dom_l)$ and integrating by parts in
  (\ref{weakformulationequation1}) yields
  \begin{align}
    0 &=  -\spl \Fln{l} \cdot \vt{n_l},\onGamma{\varphi_l} \spr_\Gamma %
	+ \spl \Fln{3-l} \cdot \vt{n_l},\onGamma{\varphi_l} \spr_\Gamma \label{weakFluxContinuity}
  \end{align}
  for all $(\varphi_1,\varphi_2) \in \Fs$.
  Therefore
  \begin{align}
    \Fln{l} \cdot \vt{n_l}
	= \Fln{3-l} \cdot \vt{n_l}
	\label{weakFluxContinuityTracespace}
  \end{align}
   in $\Tracespace'$ since the trace is a surjective operator. 
  \end{remark}
   Note additionally that Problem \ref{WeakSemiDiscreteFormulation} is equivalent to the semi-discrete Richards equation on the whole domain, namely to find $(p_1^n,p_2^n)$ $\in \Fs$ such that
  \begin{align}
    \spl S_1(p_1^n)&,\varphi_1 \spr - \tau\spl \Fln{1},\bm{\nabla} \varphi_1\spr  %
    + \spl S_2(p_2^n),\varphi_2 \spr - \tau \spl \Fln{2},\bm{\nabla} \varphi_2 \spr
  %
    %
    = \spl S_1(p_1^{n-1}),\varphi_1 \spr + \spl S_2(p_2^{n-1}),\varphi_2 \spr,
    \label{WeakSemiDiscreteFormulationSummed}
  \end{align}
  for all $(\varphi_1,\varphi_2) \in \Fs$. 
%

\begin{remark} By applying a Kirchhoff transform in each subdomain $\dom_l$, Problem \ref{WeakSemiDiscreteFormulation} can be reformulated as a nonlinear transmission problem. The existence and uniqueness of a solution for such problems has been studied in \cite{Jaeger1998,Jaeger2002} for the case when $\dom_1$ is surrounded by $\dom_2$, and the common boundary is smooth, however.
\end{remark}

Now we can give the weak form of the iterative scheme. 
Let  $n \in \mathbb N$ and assume that the pair $\bigl(p_1^{n-1}$, $p_2^{n-1}\bigr) \in \Fs$ is given. Furthermore, let $\lambda > 0$ and $L_l > 0$ ($l = 1, 2$) be fixed parameters and
  $$
  \plni{0}:= p_l^{n-1}, \quad \text{ as well as } \quad
  \gli{0}:=\Fln[n-1]{l}\cdot\vt{n_l} -
  \lambda \restrictTo{p_l^{n-1}}{\Gamma}.$$
The iterative scheme is defined through
\begin{problem}[$L$-scheme, weak form] \label{WeakIterScheme}
 Let $i \in \NN$ and assume that the approximations $\bigl\{\plni{k}\bigr\}_{k=0}^{i-1}$ and
  $\bigl\{\gli{k}\bigr\}_{k=0}^{i-1}$ are known for $l=1,2$. Find $\bigl(\plni[1]{i},\plni[2]{i}\bigr) \in \Fs$
  such that 
  \begin{align}
    L_l\spl p_l^{n,i},\varphi_l \spr
    - \tau\spl \flux{i},\bm{\nabla}\varphi_l \spr
    + \tau \spl \lambda \plni{i} + \gli{i},\varphi_l
\spr_\Gamma  
    %
    &= L_l\spl p_l^{n,i-1},\varphi_l\spr - \spl S_l(p_l^{n,i-1}) -
    S_l\bigl(p_l^{n-1}\bigr),\varphi_l\spr  %
    \label{weakiterschemeeqn1}  \\
    \spl g_l^i,\varphi_l \spr_\Gamma\,\,
    &\hspace{-3pt} :=\spl -2\lambda
    {p_{3-l}^{n,i-1}}-  g_{3-l}^{i-1},\varphi_l \spr_\Gamma   && \label{weakgliupdate}
  \end{align}
  holds for all $(\varphi_1,\varphi_2)\in \Fs$.
\end{problem}
%

\subsection{Intuitive justification of the  \texorpdfstring{$L$}{L}-scheme}
We start the analysis by taking a closer look at the formal limit of the $L$-scheme iterations in weak form and show that this is actually a
reformulation of Problem \ref{WeakSemiDiscreteFormulation}. 

\begin{lemma}[Limit of the \texorpdfstring{$L$}{L}-scheme]\label{iterationinterpretationlemma}
  Let $n \in \NN$ be fixed and assume that the functions $p_l^n\in \Fs_l$ and $g_l\in \Tracespace'$ ($l = 1, 2$) exist such that
    \begin{align}
      \spl S_l(p_l^n),\varphi_l\spr - \spl S_l\bigl(p_l^{n-1}\bigr)&,\varphi_l\spr
      - \tau\spl \Fln{l},\bm{\nabla}\varphi_l \spr 
      + \tau \spl \lambda p^n_l,+ g_l, \varphi_l \spr_\Gamma  
      = 0, 	%
      \label{weakiterschemelimiteqn1} 	\\
      \spl g_l,\varphi_l \spr_\Gamma  =\spl &-2\lambda p_{3-l}^{n} -
      g_{3-l}, \varphi_l \spr_\Gamma, 	&& \label{weakiterschemelimiteqn2}
      \end{align}
      hold for all $(\varphi_1,\varphi_2) \in \Fs$.  Then the interface conditions
      \begin{align}
	\onGamma{ p_1^n } 		&= \onGamma{ p_2^n }	&& \mbox{ in } \Tracespace, \\
	\Fln{1} \cdot \vt{n_1}%
	  &= \Fln{2} \cdot \vt{n_1} && \mbox{ in } \Tracespace'		
\label{FluxContinuity}
      \end{align}
      are satisfied and $(p_1^n,p_2^n)$ solves Problem \ref{WeakSemiDiscreteFormulation}.
      Moreover,
      \begin{align}
	g_l = -\lambda \onGamma{ p_l^n } + \Fln{l} \cdot
\vt{n_l}  \label{Gequation}
      \end{align}
      in $\Tracespace'$.
      Conversely, if $(p_1^n,p_2^n)\in \Fs$ is a solution of Problem \ref{WeakSemiDiscreteFormulation} and 
      $ g_l := -\lambda \onGamma{ p_l^n } + \Fln{l}
      \cdot \vt{n_l}$, then $p_l^n$ and $g_l$ solve the system
      (\ref{weakiterschemelimiteqn1}),  (\ref{weakiterschemelimiteqn2}).
\end{lemma}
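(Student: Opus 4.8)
The plan is to read off, from the two tested identities \eqref{weakiterschemelimiteqn1}--\eqref{weakiterschemelimiteqn2}, a family of genuine $\Tracespace'$-equalities on $\Gamma$, and then combine them linearly to produce the transmission conditions and Problem \ref{WeakSemiDiscreteFormulation}. First I would process \eqref{weakiterschemelimiteqn2}: because the trace map $\Fs_l \to \Tracespace$ is surjective, the trace $\onGamma{\varphi_l}$ sweeps out all of $\Tracespace$ as $(\varphi_1,\varphi_2)$ runs over $\Fs$, so \eqref{weakiterschemelimiteqn2} is equivalent to the $\Tracespace'$-identities $g_l = -2\lambda\onGamma{p_{3-l}^n} - g_{3-l}$ for $l=1,2$. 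Adding these gives $g_1 + g_2 = -\lambda(\onGamma{p_1^n} + \onGamma{p_2^n})$, and subtracting them gives $0 = 2\lambda(\onGamma{p_1^n} - \onGamma{p_2^n})$; since $\lambda > 0$, the latter already yields the pressure continuity $\onGamma{p_1^n} = \onGamma{p_2^n}$, hence $(p_1^n, p_2^n) \in \Fs$.

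Next I would exploit \eqref{weakiterschemelimiteqn1}. Testing with $\varphi_l \in C_0^\infty(\dom_l)$ paired with $\varphi_{3-l} = 0$ (admissible in $\Fs$, as both traces vanish) annihilates the interface term and, by the variational lemma, shows $\dv\Fln{l} \in L^2(\dom_l)$ with $S_l(p_l^n) - S_l(p_l^{n-1}) + \tau\dv\Fln{l} = 0$ a.e.\ in $\dom_l$; thus $\Fln{l} \in H(\mathrm{div},\dom_l)$ and, by Lemma III.1.1 in \cite{Brezzi1991}, the normal trace $\Fln{l}\cdot\vt{n_l}$ is well-defined in $\Tracespace'$. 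Integrating by parts in \eqref{weakiterschemelimiteqn1} for arbitrary $(\varphi_1,\varphi_2)\in\Fs$ and cancelling the volume contributions through the strong equation leaves $\spl \lambda\onGamma{p_l^n} + g_l - \Fln{l}\cdot\vt{n_l}, \onGamma{\varphi_l}\spr_\Gamma = 0$; surjectivity of the trace then gives exactly \eqref{Gequation}.

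With \eqref{Gequation} established the remaining claims follow by elimination on $\Gamma$. Summing \eqref{Gequation} over $l$ and comparing with $g_1 + g_2 = -\lambda(\onGamma{p_1^n} + \onGamma{p_2^n})$ forces $\Fln{1}\cdot\vt{n_1} + \Fln{2}\cdot\vt{n_2} = 0$, which is the flux continuity \eqref{FluxContinuity} once $\vt{n_2} = -\vt{n_1}$ is used. To recover Problem \ref{WeakSemiDiscreteFormulation}, I would insert \eqref{Gequation} (for index $3-l$) into $g_l = -2\lambda\onGamma{p_{3-l}^n} - g_{3-l}$ and use pressure continuity together with $\vt{n_{3-l}} = -\vt{n_l}$ to obtain $\lambda\onGamma{p_l^n} + g_l = \Fln{3-l}\cdot\vt{n_l}$; substituting this into the boundary term of \eqref{weakiterschemelimiteqn1} reproduces \eqref{weakformulationequation1} verbatim. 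The converse is a direct verification: setting $g_l := -\lambda\onGamma{p_l^n} + \Fln{l}\cdot\vt{n_l}$, both \eqref{weakiterschemelimiteqn2} and the boundary term of \eqref{weakiterschemelimiteqn1} reduce to the flux continuity \eqref{weakFluxContinuityTracespace} and the pressure continuity already available for solutions of Problem \ref{WeakSemiDiscreteFormulation}.

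I expect the only real obstacle to be the interface functional analysis rather than the algebra: one must certify that $\Fln{l}\cdot\vt{n_l}$ belongs to $\Tracespace'$ (combining the $H(\mathrm{div})$-regularity with the normal-trace theorem on the Lipschitz interface $\Gamma$), justify the Green's formula in the $\Tracespace'$--$\Tracespace$ duality, and use surjectivity of $\Fs_l \to \Tracespace$ to upgrade the tested identities to pointwise $\Tracespace'$-equalities. Once these are in place, the passage between the $L$-scheme limit and Problem \ref{WeakSemiDiscreteFormulation} is purely linear.
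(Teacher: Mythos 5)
Your proposal is correct and follows essentially the same route as the paper: add/subtract the two $g$-identities to get pressure continuity and $g_1+g_2=-\lambda(\onGamma{p_1^n}+\onGamma{p_2^n})$, use compactly supported test functions plus the normal-trace theorem to integrate by parts and obtain \eqref{Gequation}, and verify the converse directly. The only (harmless) cosmetic difference is that you substitute $\lambda\onGamma{p_l^n}+g_l=\Fln{3-l}\cdot\vt{n_l}$ into each subdomain equation separately, whereas the paper sums the two equations and invokes the pre-established equivalence with \eqref{WeakSemiDiscreteFormulationSummed}.
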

\begin{remark}
 Lemma \ref{iterationinterpretationlemma} states that solving Problem \ref{WeakSemiDiscreteFormulation}
is equivalent to finding a solution to (\ref{weakiterschemelimiteqn1}), (\ref{weakiterschemelimiteqn2}). This reformulation will be used to show, that the L-scheme converges to a solution of Problem \ref{WeakSemiDiscreteFormulation}
\end{remark}
%
\begin{proof}
  Writing out (\ref{weakiterschemelimiteqn2}) for $l=1,2$ and subtracting the resulting equations 
  yields $\onGamma{ p_1^n } = \onGamma{ p_2^n}$ in the sense of traces.
  On the other hand, adding up these equations leads to %
  $ (g_1 + g_2) = -\lambda (\onGamma{ p_1^n } + \onGamma{ p_2^n })$. Inserting this into the sum of the equations
  (\ref{weakiterschemelimiteqn1}) leads to (\ref{WeakSemiDiscreteFormulationSummed}), and by equivalence to the
  semi-discrete formulation (\ref{weakformulationequation1}). Moreover, by (\ref{fluxIsInHdiv}) one has  $S_l(p_l^n) - S_l(p_l^{n-1}) = -\tau\dv \Fln{l}
  $ a.e. and therefore integrating by parts in (\ref{weakiterschemelimiteqn1}) gives
    $ g_l = -\lambda \onGamma{ p_l^n } + \Fln{l} \cdot \vt{n_l}$ in $\Tracespace'$.

  Conversely, if $(p_1^n,p_2^n)$ solves Problem \ref{WeakSemiDiscreteFormulation}, then
  $\onGamma{ p_1^n } = \onGamma{ p_2^n }$ and  
  \begin{align}
      g_l = - \lambda \onGamma{ p_l^n } &+ \Fln{l} \cdot \vt{ n_l } %
    %
    %
= -\lambda \onGamma{ p_{3-l}^n } + \Fln{3-l} \cdot \vt{ n_{3-l} }
= -2 \lambda \onGamma{ p_{3-l}^n } -   g_{3-l}
  \end{align}
  is deduced by the flux continuity (\ref{weakFluxContinuityTracespace}).
 Finally, (\ref{weakiterschemelimiteqn1}) now follows by integrating (\ref{fluxIsInHdiv}) by parts and using
  the definition of  $g_l$.
\end{proof}

%
\subsection{Convergence of the scheme} \label{SubsectionConvergenceOfScheme}

The convergence of the $L$-scheme involves two steps: first, we prove the existence and uniqueness of a solution to
Problem \ref{WeakIterScheme} defining the linear iterations, and then we prove the convergence of the sequence of such solutions to the expected limit.
%
%
\begin{lemma} \label{WeakIterSchemeStepExistenceLemma}
 Problem \ref{WeakIterScheme} has a unique solution.
\end{lemma}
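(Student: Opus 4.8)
The plan is to recast Problem~\ref{WeakIterScheme} as a single linear variational problem and to invoke the Lax--Milgram theorem. The decisive structural observation is that at iteration $i$ the flux $\flux{i} = -\klni{i-1}\,\bm{\nabla}\bigl(\plni{i}+z\bigr)$ is \emph{linear} in the unknown $\plni{i}$, because the mobility $\kappa_l := \klni{i-1}$ is evaluated at the already computed iterate $\plni{i-1}$ and is therefore a fixed, measurable coefficient satisfying $m \le \kappa_l \le 1$ by Assumptions~\ref{assumptionsondata}a)--b). Likewise $\gli{i}$ is a known datum supplied by \eqref{weakgliupdate}. First I would separate, in each of the two equations, the part depending on the unknown from the known part. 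Using $-\tau\spl\flux{i},\bm{\nabla}\varphi_l\spr = \tau\spl\kappa_l\bm{\nabla}\plni{i},\bm{\nabla}\varphi_l\spr + \tau\spl\kappa_l\bm{\nabla} z,\bm{\nabla}\varphi_l\spr$ and moving the gravity contribution, the $\gli{i}$ term, and the right-hand data into a linear functional $F$, the unknown terms assemble into the bilinear form
\begin{equation*}
 a\bigl((p_1,p_2),(\varphi_1,\varphi_2)\bigr) := \sum_{l=1,2}\Bigl( L_l\spl p_l,\varphi_l\spr + \tau\spl \kappa_l\bm{\nabla} p_l,\bm{\nabla}\varphi_l\spr + \tau\lambda\spl p_l,\varphi_l\spr_\Gamma\Bigr).
\end{equation*}

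Since $\gli{i}$ is known, $a$ couples $p_l$ only with $\varphi_l$ and is thus block diagonal; the only inter-subdomain coupling is through the data (and, in the $\Fs$-formulation, through the interface continuity built into the space). In either reading the estimates below hold on the product $\Fs_1\times\Fs_2$ and descend to its closed subspace $\Fs$, so it suffices to verify the Lax--Milgram hypotheses there. Boundedness of $a$ follows from Cauchy--Schwarz, using $\kappa_l\le 1$ on the gradient term and the continuity of the trace operator $H^1(\dom_l)\to L^2(\Gamma)$ on the interface term. Coercivity is obtained by testing with $(p_1,p_2)$: since $\kappa_l \ge m > 0$ one has $\tau\spl\kappa_l\bm{\nabla} p_l,\bm{\nabla} p_l\spr \ge \tau m\norm{\bm{\nabla} p_l}^2$, the term $L_l\norm{p_l}^2$ contributes with $L_l>0$, and $\tau\lambda\norm{p_l}_{L^2(\Gamma)}^2\ge 0$ because $\lambda>0$; hence $a\bigl((p_1,p_2),(p_1,p_2)\bigr) \ge \min_l\{L_l,\tau m\}\sum_{l=1,2}\norm{p_l}_{H^1(\dom_l)}^2$. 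I stress that here the $L_l$-term already supplies full $H^1$ control, so neither a Poincar\'e inequality nor any smallness condition on $\tau$ is needed for the well-posedness of a single iteration; the restriction on $\tau$ in Theorem~\ref{strongmainresult} enters only at the convergence stage.

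The main point demanding care is the well-definedness and boundedness of the interface part of $F$, namely $\tau\spl\gli{i},\varphi_l\spr_\Gamma$, which requires $\gli{i}\in\Tracespace'$. I would establish this by induction on $i$. For $i=0$ one has $\gli{0}=\Fln[n-1]{l}\cdot\vt{n_l} - \lambda\restrictTo{p_l^{n-1}}{\Gamma}$, where the normal flux lies in $\Tracespace'$ by the regularity recorded after Problem~\ref{WeakSemiDiscreteFormulation} (via $\Fln[n-1]{l}\in H(\mathrm{div},\dom_l)$ and Lemma~III.1.1 of \cite{Brezzi1991}), while $\restrictTo{p_l^{n-1}}{\Gamma}\in\Tracespace\subset L^2(\Gamma)\subset\Tracespace'$. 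For the induction step, the update \eqref{weakgliupdate} expresses $\gli{i}$ through $\restrictTo{p_{3-l}^{n,i-1}}{\Gamma}\in L^2(\Gamma)\subset\Tracespace'$ and $g_{3-l}^{i-1}\in\Tracespace'$, so $\gli{i}\in\Tracespace'$ as well.

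The remaining data terms of $F$ are finite because the Lipschitz maps $S_l$ (Assumption~\ref{assumptionsondata}c)) send $L^2$ into $L^2$ on the bounded $\dom_l$ and $\bm{\nabla} z$ is constant, so that $F$ is a bounded functional on $\Fs$. With $a$ bounded and coercive and $F$ bounded, the Lax--Milgram theorem yields a unique $\bigl(\plni[1]{i},\plni[2]{i}\bigr)$ solving Problem~\ref{WeakIterScheme}, which is the assertion. I expect the only genuinely nonroutine step to be the bookkeeping that keeps $\gli{i}$ in $\Tracespace'$ across iterations; everything else reduces to standard elliptic estimates.
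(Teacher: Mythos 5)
Your argument is correct and follows exactly the route the paper takes: the paper's own proof consists of the single sentence that the claim ``is a direct consequence of the Lax--Milgram lemma,'' and your write-up simply supplies the details it suppresses (frozen coefficient $\kappa_l$ with $m\le\kappa_l\le 1$, coercivity from $L_l>0$ and $\tau m>0$ without any Poincar\'e inequality or smallness condition on $\tau$, and the inductive bookkeeping keeping $\gli{i}\in\Tracespace'$). No gap; this matches the paper's approach.
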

\begin{proof}
  This is a direct consequence of the Lax-Milgram lemma. 
%
\end{proof}

We now prove the convergence result, which was announced in Theorem \ref{strongmainresult}. We assume that the solution $\bigl(p_1^{n-1},p_2^{n-1}\bigr)$ of Problem \ref{WeakSemiDiscreteFormulation} at time step $(n-1)$ is known and let $p_l^{n,0} \in \Fs_l$ be arbitrary starting pressures (however, a natural choice is $p_l^{n,0} := p_l^{n-1}$).

Lemma \ref{WeakIterSchemeStepExistenceLemma} enables us to construct a sequence  $\bigl\{\plni{i}\bigr\}_{i \in \NN_0}
\in \Fs_l^\NN$ of solutions to Problem \ref{WeakIterScheme}  and prove its convergence to the solution
$\bigl(p_1^{n},p_2^{n}\bigr)$ of Problem \ref{WeakSemiDiscreteFormulation} at the subsequent time step.

\begin{theorem}[Convergence of the DD scheme] \label{mainresult}
  Assume there exists a solution $(p_1^n,p_2^n)\in \Fs$ to Problem \ref{WeakSemiDiscreteFormulation} s.t. $\sup_l\norm{\bm{\nabla} \bigl( p_l^n + z \bigr) }_{L^\infty} \leq M < \infty $ and let $g_l$ be as in (\ref{Gequation}).  %
  Let Assumptions \ref{assumptionsondata} hold, $\lambda > 0$ and $L_l\in \RR $ be given with $L_{S_l}/2 < L_{l}$ for $l=1,2$. For arbitrary starting pressures $p_l^{n,0} := v_{l,0}\in \Fs_l$ $(l=1,2)$ let $\bigl\{(p_1^{n, i}, p_2^{n, i})\bigr\}_{i \in \NN_0}$  be the sequence of solutions of Problem \ref{WeakIterScheme} and let $\bigl\{\gli{i}\bigr\}_{i \in \NN_0}$ be defined by (\ref{weakgliupdate}).
  Assume further that the time step $\tau$ satisfies
  \begin{align}
    \tau < \frac{2m}{L_{k_l}^2M^2}\left(\frac{1}{L_{S_l}} -\frac{1}{2L_{l}}\right).   \label{timestepRestriction}
  \end{align}
  Then $\plni{i} \rightarrow p_l^n$ in $\Fs_l$ and $\gli{i} \rightarrow g_l$ in $\Fs_l'$ as $i\rightarrow \infty$ for
$l=1,2$.  
\end{theorem}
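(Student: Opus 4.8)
The plan is to run a standard $L$-scheme energy argument on the iteration errors, where the genuinely new ingredient is showing that the interface contributions telescope. I define the errors $e_{p,l}^i := p_l^{n,i} - p_l^n \in \mathcal{V}_l$ and $e_{g,l}^i := g_l^i - g_l$ on $\Gamma$. Subtracting the limit identities of Lemma \ref{iterationinterpretationlemma} from the weak iteration equations of Problem \ref{WeakIterScheme}, and using (\ref{fluxIsInHdiv}), I obtain for each $l$ an error equation valid against any $\varphi_l \in \mathcal{V}_l$, together with the interface recursion $e_{g,l}^i = -2\lambda\, e_{p,3-l}^{i-1} - e_{g,3-l}^{i-1}$ on $\Gamma$ coming from (\ref{weakgliupdate}). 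I split the flux difference as $F_l^{n,i} - F_l^n = -k_l(S_l(p_l^{n,i-1}))\nabla e_{p,l}^i - \bigl(k_l(S_l(p_l^{n,i-1})) - k_l(S_l(p_l^n))\bigr)\nabla(p_l^n+z)$, isolating a coercive diffusion part and a perturbation controlled by the Lipschitz constants and by $M$.

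Next I test the $l$-th error equation with $\varphi_l = e_{p,l}^i$ and sum over $l=1,2$. The key step is the interface term $\tau\sum_l\langle e_{g,l}^i + \lambda e_{p,l}^i, e_{p,l}^i\rangle_\Gamma$. Using the polarisation identity $4\lambda\langle e_{g,l}^i + \lambda e_{p,l}^i, e_{p,l}^i\rangle_\Gamma = \|e_{g,l}^i + 2\lambda e_{p,l}^i\|_{L^2(\Gamma)}^2 - \|e_{g,l}^i\|_{L^2(\Gamma)}^2$ together with the recursion rewritten as $e_{g,3-l}^{i+1} = -(e_{g,l}^i + 2\lambda e_{p,l}^i)$, this term collapses to $\frac{\tau}{4\lambda}(B^{i+1} - B^i)$ with $B^i := \|e_{g,1}^i\|_{L^2(\Gamma)}^2 + \|e_{g,2}^i\|_{L^2(\Gamma)}^2$, so the whole interface contribution telescopes in $i$.

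For the interior terms I invoke $k_l \geq m$ for coercivity, bound the perturbation term by $\tau M L_{k_l}\|S_l(p_l^{n,i-1}) - S_l(p_l^n)\|\,\|\nabla e_{p,l}^i\|$ and split it with Young's inequality, absorbing half of the diffusion and leaving a residue $\frac{\tau M^2 L_{k_l}^2}{2m}\|S_l(p_l^{n,i-1}) - S_l(p_l^n)\|^2$. For the saturation terms I expand $L_l\langle e_{p,l}^{i-1}, e_{p,l}^i\rangle = \frac{L_l}{2}\|e_{p,l}^{i-1}\|^2 + \frac{L_l}{2}\|e_{p,l}^i\|^2 - \frac{L_l}{2}\|e_{p,l}^i - e_{p,l}^{i-1}\|^2$ and split $\langle S_l(p_l^{n,i-1}) - S_l(p_l^n), e_{p,l}^i\rangle$ into a monotone part, bounded below by $\frac{1}{L_{S_l}}\|S_l(p_l^{n,i-1}) - S_l(p_l^n)\|^2$, plus a remainder; choosing the Young parameter equal to $L_l$ makes the remainder's $\|e_{p,l}^i - e_{p,l}^{i-1}\|^2$ cancel exactly against the stabilisation term. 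Collecting everything yields the monotone decay $E^i + \frac{\tau m}{2}\sum_l\|\nabla e_{p,l}^i\|^2 + \sum_l c_l\|S_l(p_l^{n,i-1}) - S_l(p_l^n)\|^2 \leq E^{i-1}$, where $E^i := \sum_l\frac{L_l}{2}\|e_{p,l}^i\|^2 + \frac{\tau}{4\lambda}B^{i+1}$ and $c_l := \bigl(\frac{1}{L_{S_l}} - \frac{1}{2L_l}\bigr) - \frac{\tau M^2 L_{k_l}^2}{2m}$, which is strictly positive precisely under (\ref{timestepRestriction}).

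Finally I sum over $i$: the telescoping of $B$ and the boundedness of $E^i$ force $\sum_i\|\nabla e_{p,l}^i\|^2 < \infty$, hence $\nabla e_{p,l}^i \to 0$, and Poincaré (using the homogeneous Dirichlet trace on the outer boundary portion $\partial\Omega^{l}$) upgrades this to $p_l^{n,i} \to p_l^n$ in $\mathcal{V}_l$. The convergence $g_l^i \to g_l$ in $\mathcal{V}_l'$ then follows by returning to the error equation and bounding $\langle e_{g,l}^i, \varphi_l\rangle_\Gamma$ by the now-vanishing $H^1$-norms of $e_{p,l}^i$ and $e_{p,l}^{i-1}$, the $L^2$-norm of $S_l(p_l^{n,i-1}) - S_l(p_l^n)$, and the $L^2$-norm of $F_l^{n,i} - F_l^n$, all scaled by $\tau^{-1}$. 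The step I expect to be delicate is justifying the $L^2(\Gamma)$ framework for the interface errors: the telescoping identity needs $g_l^i, g_l \in L^2(\Gamma)$ rather than merely $H^{1/2}_{00}(\Gamma)'$, and it is exactly here that the hypothesis $\sup_l\|\nabla(p_l^n+z)\|_{L^\infty}\le M$ is used, to guarantee that the normal fluxes and hence the $g$-data are square-integrable on $\Gamma$; only then can the interface telescoping be combined cleanly with the interior $L$-scheme estimate under the sharp step-size bound.
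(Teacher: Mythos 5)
Your proposal follows essentially the same route as the paper's proof: the same error equations obtained by subtracting the iteration from the limit identities of Lemma \ref{iterationinterpretationlemma}, the same splitting of the flux difference into a coercive part and an $M$-controlled perturbation, the same Young-inequality choices yielding the constants $\frac{1}{L_{S_l}}-\frac{1}{2L_l}-\frac{\tau L_{k_l}^2M^2}{2m}$ and $\frac{m}{2}$, the same telescoping of the interface terms via the recursion $e_{g,3-l}^{i+1}=-(e_{g,l}^i+2\lambda e_{p,l}^i)$ (your polarisation identity is just a repackaging of the paper's squared-recursion computation in (\ref{boundarytrickpre})--(\ref{boundarytrick})), and the same summation-plus-Poincar\'e conclusion followed by the dual-norm argument for $g_l^i$. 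The argument is correct and matches the paper's; your closing caveat about the $L^2(\Gamma)$ framework for the interface errors is a fair observation about a point the paper itself treats implicitly, not a divergence in method.
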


 \begin{remark} The essential boundedness of the pressure gradients can be proven under the additional assumption that the functions $S_l$ are strictly increasing and the domain is of class $C^{1,\alpha}$, see e.g. \cite[Lemma 2.1]{Cao201525}.   %
 \end{remark}

\begin{proof}
  We introduce the iteration errors $\epli{i}	:= p_l^n - \plni{i}$ as well as $\egli{i}	:= g_l^n - \gli{i}$,
  add
  $L_l\tspl p_l^n,\varphi_l\tspr - L_l\tspl p_l^n,\varphi_l\tspr$ to (\ref{weakiterschemelimiteqn1}) and subtract
  (\ref{weakiterschemeeqn1}) to arrive at
  \begin{align}
    L_l&\spl \epli{i},\varphi_l \spr %
      + \tau  \lambda \spl \epli{i},  \varphi_l  \spr_\Gamma%
      + \tau  \spl \egli{i}, \varphi_l  \spr_\Gamma  
    + \tau \Bigl[ \bspl -\Fln{l}
    {
    -\, \klni{i-1} \bm{\nabla} \bigl( p_l^n + z \bigr) } \nonumber \\
    &{
    \,+\, \klni{i-1}\bm{\nabla} \bigl( p_l^n + z \bigr) }
    + \flux{i},\bm{\nabla}\varphi_l\bspr  \Bigr]		
    = L_l\spl \epli{i-1},\varphi_l \spr
    - \spl S_l(p_l^n)- S_l(\plni{i-1}), \varphi_l \spr.
   \label{mainproofstep1}
  \end{align}
  Inserting $\varphi_l := \epli{i}$ in (\ref{mainproofstep1}) and noting that
  \begin{align}
  L_l\bspl\epli{i}-\epli{i-1},\epli{i}\bspr
    &= \frac{L_l}{2}\Bigl[ \bigl\|\epli{i}\bigr\|^2 
    - \bigl\|\epli{i-1}\bigr\|^2
    + \bigl\|\epli{i}-\epli{i-1}\bigr\|^2\Bigr], \nonumber 
  \end{align}
  yields
  \begin{align}
    \frac{L_l}{2}&\Bigl[ \bigl\|\epli{i}\bigr\|^2 %
      - \bigl\|\epli{i-1}\bigr\|^2
      + \bigl\|\epli{i}-\epli{i-1}\bigr\|^2\Bigr]
      %
    \underbrace{{
    +\spl S_l(p_l^n)
      -S_l(\plni{i-1}), \epli{i-1}\spr}}_{=: I_1} %
      + \tau \lambda \spl \epliOnGamma{i}, \epliOnGamma{i} \spr_\Gamma %
    \nonumber\\%
    &= \underbrace{\spl S_l(p_l^n)- S_l(\plni{i-1}),
      {
      \epli{i-1}}-\epli{i}
      \spr}_{=: I_2}  - \tau     \spl \egli{i},\epliOnGamma{i} \spr_\Gamma %
    \nonumber \\	
    &-\underbrace{\tau \bspl\Bigl(\kln{l} -\klni{i-1}\!%
      \Bigr)\bm{\nabla} \bigl( p_l^n + z \bigr),\gradepli{i}\bspr}_{=:   I_3}
    %
    -\underbrace{\tau \bspl\klni{i-1}\gradepli{i}
    ,\gradepli{i}\bspr.}_{=:I_4}
    \label{mainproofstep2}
  \end{align}
 We estimate now the terms $I_1$--$I_4$ in (\ref{mainproofstep2}) one by one.   By Assumption \ref{assumptionsondata}{c)}, for $I_1$ we have
%
  \begin{align}
  \frac{1}{L_{S_l}} \bigl\| S_l(p_l^n) - \Splni{i-1}\bigr\|^2
    &\leq \spl S_l(p_l^n) - \Splni{i-1}, \epli{i-1}\spr.\label{I4estimateitem}
  \end{align}
%
 $I_2$ is estimated by 
  \begin{align}
    \bigl|I_2\bigr|
      &= \left|\bspl S_l(p_l^n)- S_l(\plni{i-1}), \epli{i-1}-\epli{i} \bspr \right| 
%
      \leq \frac{L_{l}}{2}\bigl\|\epli{i-1}-\epli{i} \bigr\|^2%
      + \frac{1}{2L_{l}}  \bigl\| S_l(p_l^n)%
      - S_l(\plni{i-1})\bigr\|^2 .\label{I1estimate}
  \end{align}
%
  For an $\epsilon_l > 0$ to be chosen below we use Young's inequality to deal with $I_3$, which can be estimated by
  \begin{align}
    \bigl|I_3\bigr| &= \Bigl|\tau \bspl\Bigl(\kln{l}%
    -\klni{i-1}\!\Bigr)\vt{ \bm{\nabla}} \bigl( p_l^n
    + z \bigr) ,\gradepli{i}\bspr\Bigr| \nonumber \\%
    &\leq \tau \bigl\|\bigl(\kln{l} -\klni{i-1}\bigr)\bm{\nabla} \bigl( p_l^n + z \bigr) \bigr\|
  \bigl\|\gradepli{i}\bigr\| 		\nonumber \\
    &
    \leq \tau L_{k_l}M \bigl\|S_l(p_l^n)-S_l(\plni{i-1})\bigr\| \bigl\|\gradepli{i}\bigr\| \nonumber \\%
    &\leq \tau L_{k_l}M\epsilon_l\bigl\|S_l(p_l^n)-S_l(\plni{i-1})\bigr\|^2 + \tau\frac{L_{k_l}M}{4\epsilon_l}
  \bigl\|\gradepli{i}\bigr\|^2, \label{mainproofstep2b}
  \end{align}
  where we used the Lipschitz continuity of $k_l$ and the assumption $\sup_l\bigl\| \bm{\nabla} \bigl( p_l^n + z \bigr)  \bigr\|_{L^\infty}< M$.
  %
Finally, by Assumption \ref{assumptionsondata}{b)} one has
  \begin{align}
    \tau \bspl\klni{i-1}\gradepli{i},\gradepli{i}\bspr \geq \tau m \bigl\|\gradepli{i}  \bigr\|^2
    \label{I4estimate}
  \end{align}
  for $I_4$.
  Using the estimates (\ref{I4estimateitem})--(\ref{I4estimate}), (\ref{mainproofstep2}) becomes
  \ifoptionaltext
    {\optionaltextcolor
      \begin{align}
	\frac{L_l}{2}\Bigl[ &\bigl\|\epli{i}\bigr\|^2 %
	  - \bigl\|\epli{i-1}\bigr\|^2
	  + \bigl\|\epli{i}-\epli{i-1}\bigr\|^2\Bigr]
	  +\spl S_l(p_l^n)- S_l(\plni{i-1}), \epli{i-1}\spr %
	  \nonumber \\
	  +& \tau \lambda \spl \epliOnGamma{i}, \epliOnGamma{i} \spr_\Gamma \nonumber\\
	&= \spl S_l(p_l^n)- S_l(\plni{i-1}), \epli{i-1}-\epli{i} \spr
	- \tau \spl \egli{i},\epliOnGamma{i} \spr_\Gamma			 	\nonumber \\	
	&-\tau \bspl\Bigl(\kln{l} -\klni{i-1}\!\Bigr)\vt{ \bm{\nabla} \bigl( p_l^{n-1} - z \bigr) },\gradepli{i}\bspr
	  -\tau \bspl\klni{i-1}\gradepli{i},\gradepli{i}\bspr.
      \end{align}
    }
  \fi
  \begin{align}
    \frac{L_l}{2}\Bigl[& \bigl\|\epli{i}\bigr\|^2 %
      - \bigl\|\epli{i-1}\bigr\|^2
      \Bigr]
      +\frac{1}{L_{S_l}} \bigl\| S_l(p_l^n) - \Splni{i-1}\bigr\|^2%
      %
      + \tau \lambda \spl \epliOnGamma{i}, \epliOnGamma{i} \spr_\Gamma %
      + \tau \spl \egli{i},\epliOnGamma{i} \spr_\Gamma			\nonumber\\
    &\leq \left( \frac{1}{2L_{l}} +\tau L_{k_l}M\epsilon_l \right)
    \bigl\|S_l(p_l^n)-S_l(\plni{i-1})\bigr\|^2 	
    %
    + \tau \left(\frac{L_{k_l}M}{4\epsilon_l}
      - m \right)\bigl\|\gradepli{i} \bigr\|^2.
    \tag{\ref{mainproofstep2}'}\label{mainproofstep3}
  \end{align}
  In order to deal with the interface term $\tau \spl \egli{i},\epliOnGamma{i} \spr_\Gamma$ recall, that $\spl \cdot, \cdot \spr_\Gamma$ denotes the dual pairing of $\Tracespace'$ and $\Tracespace$ and the $\Tracespace$-norm simultaneously. 
  %
  %
Subtracting (\ref{weakgliupdate}) from (\ref{weakiterschemelimiteqn2}), %
  i.e. $ \egli{i} = -2\lambda \epli[3-l]{i-1} -  \egli[3-l]{i-1}$,
  %
  we  get
  \begin{align}
    \ifoptionaltext
    { \optionaltextcolor
    \spl \egli{i+1}, \egli{i+1} \spr_\Gamma }&{\optionaltextcolor%
    = 4\lambda^2 \spl \epli[3-l]{i}, \epli[3-l]{i} \spr_\Gamma %
    + \spl \egli[3-l]{i}, \egli[3-l]{i}
    \spr_\Gamma %
    + 4\lambda  \spl \epli[3-l]{i}, \egli[3-l]{i}\spr_\Gamma}, \mbox{
  \optionaltextcolor thus }\nonumber \\
    \fi
    \bigl\| \epli[l]{i} \bigr\|^2_\tGamma  &= \frac{1}{  4\lambda^2 } %
    \left(\bigl\| \egli[3-l]{i+1} \bigr\|^2_\tGamma %
    -\bigl\| \egli[l]{i} \bigr\|^2_\tGamma %
    - 4\lambda  \spl \epli[l]{i}, \egli[l]{i}\spr_\tGamma \right).
    \label{boundarytrickpre}
  \end{align}
  With $b\in\{p,g\}$ we let $e_b^i := (e_{b,1}^i,e_{b,2}^i) \in \Fs_1 \times \Fs_2$ and
  $\norm{e_b^i}^2:= \sum_{l=1} \norm{e_{b,l}^i}^2$. Similarly, on $\Gamma$ we let $\spl e_{b}^i,e_{b}^i \spr_\tGamma :=  \sum_{l=1}^2 \spl
    e_{b,l}^i,e_{b,l}^i \spr_\tGamma$ and correspondingly
  $\norm{e_b^i}_\tGamma^2
    = \sum_{l=1}^2 \norm{e_{b,l}^i}_\tGamma^2$. Summing in (\ref{boundarytrickpre}) over $l = 1, 2$ gets
  \begin{align}
    \norm{e_p^i}^2_\tGamma  &= \frac{1}{  4\lambda^2 } %
    \Bigl(\norm{e_g^{i+1}}_\tGamma^2  %
      -\norm{e_g^{i}}^2_\tGamma  %
      - 4\lambda  \spl e_p^{i}, e_g^{i}\spr_\tGamma \Bigr).
      \label{boundarytrick}
  \end{align}
  Doing the same for (\ref{mainproofstep3}) and inserting (\ref{boundarytrick}), leaves us with
  \begin{align}
    \frac{L_l}{2}&\Bigl[ \bigl\|e_p^i\bigr\|^2 %
      - \bigl\|e_p^{i-1}\bigr\|^2
      \Bigr]
      +\sum_{l=1}^2\frac{1}{L_{S_l}} \bigl\| S_l(p_l^n) - \Splni{i-1}\bigr\|^2%
      \nonumber \\
      &+  \frac{\tau}{  4\lambda} %
	\left(\norm{e_g^{i+1}}_\tGamma^2  %
	  -\norm{e_g^{i}}^2_\tGamma %
	  \right) 
      + \tau\sum_{l=1}^2\biggl( m -\frac{L_{k_l}M}{4\epsilon_l} \biggr) \bigl\|\gradepli{i}\bigr\|^2 \nonumber\\%
    &\leq \sum_{l=1}^2\left(\frac{1}{2L_{l}} +\tau L_{k_l}M\epsilon_l\right)  \bigl\| S_l(p_l^n)-
    S_l(\plni{i-1})\bigr\|^2.
    \label{mainproofstep4}
  \end{align}
  \ifoptionaltext
  {\optionaltextcolor
  or equivalently
  \begin{align}
    &\sum_{l=1}^2\left(\frac{1}{L_{S_l}} -\frac{1}{2L_{l}} -\tau L_{k_l}M\epsilon_l \right) \bigl\| S_l(p_l^n)
  - \Splni{i-1}\bigr\|^2%
      + \tau \sum_{l=1}^2\left(m -\frac{L_{k_l}M}{4\epsilon_l}\right)\bigl\|\gradepli{i} \bigr\|^2
  \nonumber\\
    &\leq - \frac{L_l}{2}\Bigl[ \bigl\|e_p^i\bigr\|^2 %
    - \bigl\|e_p^{i-1}\bigr\|^2
    \Bigr]
    -  \frac{\tau}{  4\lambda} %
    \left(\norm{e_g^{i+1}}_\Gamma^2  %
      -\norm{e_g^{i}}^2_\Gamma %
    \right)%
  \end{align}
  }
  \fi
  Now, summing for the iteration index $i=1,\dots,r$ and noticing telescopic sums one gets
  \begin{align}
    \sum_{i=1}^r&\sum_{l=1}^2\Bigl(\frac{1}{L_{S_l}} -\frac{1}{2L_{l}} -\tau L_{k_l}M\epsilon_l \Bigr)
    \bigl\| S_l(p_l^n) - \Splni{i-1}\bigr\|^2%
    \nonumber \\
    &
    + \tau \sum_{i=1}^r\sum_{l=1}^2\left(m -\frac{L_{k_l}M}{4\epsilon_l}\right)\bigl\|\gradepli{i} \bigr\|^2 	
    \nonumber\\
    &\leq  \frac{L_l}{2}\Bigl[ \bigl\|e_p^0\bigr\|^2 %
    - \bigl\|e_p^{r}\bigr\|^2
      \Bigr]
	+  \frac{\tau}{  4\lambda} %
    \Bigl(\norm{e_g^{1}}_\tGamma^2  %
      -\norm{e_g^{r+1}}^2_\tGamma %
    \Bigr).%
    \label{mainproofstep5}
  \end{align}
  Now we choose $\epsilon_l = \frac{L_{k_l}M}{2m}$, hence $m -\frac{L_{k_l}M}{4\epsilon_l} = \frac{m}{2} > 0$ for both $l$.
  Recalling the restriction on $L_l$, $\frac{1}{L_{S_l}}-\frac{1}{2L_{l}}
  > 0$, as well as that by the time step restriction $\frac{1}{L_{S_l}} -\frac{1}{2L_{l}} -\tau \frac{L_{k_l}^2M^2}{2m} > 0$ for $l=1,2$, the estimates 
  \begin{align}
    \sum_{i=1}^r\sum_{l=1}^2\biggl(\frac{1}{L_{S_l}} -\frac{1}{2L_{l}} &-\tau
    \frac{L_{k_l}^2M^2}{2m} \biggr) \bigl\| S_l(p_l^n) - \Splni{i-1}\bigr\|^2
    \leq  \frac{L_l}{2}\bigl\|e_p^0\bigr\|^2 %
    +  \frac{\tau}{  4\lambda}\norm{e_g^{1}}_\Gamma^2 , \label{mainproofstep6a} \\%
    %
    \tau \sum_{i=1}^r\frac{m}{2}\bigl\|\bm{\nabla} e_p^i \bigr\|^2%
    &\leq  \frac{L_l}{2}\bigl\|e_p^0\bigr\|^2 %
    +  \frac{\tau}{  4\lambda}\norm{e_g^{1}}_\tGamma^2
    \label{mainproofstep6b}
  \end{align}
  follow for for any $r \in \NN$.
Since the right hand sides are independent of $r$, we thereby conclude that the series on the left are absolutely convergent and therefore %
  $ \bigl\| S_l(p_l^n) - \Splni{i-1}\bigr\|$, $\bigl\|\gradepli{i} \bigr\|$ $\longrightarrow 0$ 	 as
$i\rightarrow
  \infty$. Moreover,
  %
  %
  (\ref{mainproofstep6b}) implies $\bigl\|\epli{i} \bigr\|$ $\longrightarrow 0$, $(i\rightarrow \infty)$ as well, by the Poincar{\' e} inequality.

  To show that $\egli{i} \rightarrow 0$ in $\Fs_l'$ we subtract again
  (\ref{weakiterschemeeqn1}) from (\ref{weakiterschemelimiteqn1}) and consider test functions $\varphi_l
\in
  C_0^\infty(\dom_l)$ to get
  \begin{align}
    -\tau \bspl\Fln{l} - \flux{i},\bm{\nabla}\varphi_l\bspr
      &= -L_l\spl \epli{i},\varphi_l \spr %
	+L_l\spl \epli{i-1},\varphi_l \spr 
	%
- \spl S_l(p_l^n)- S_l(\plni{i-1}), \varphi_l \spr. \label{giconvergencestep1}
  \end{align}
  Thus, $\dv\Bigl(\Fln{l} - \flux{i}\Bigr)$ exists in $L^2$ and
  \begin{align}
      -\tau \dv\Bigl(\Fln{l} - \flux{i}\Bigr)
      = L_l \Bigl(\epli{i} - \epli{i-1}\Bigr)  + S_l( p_l^n )- S_l(\plni{i-1})  \label{giconvergencestep2}
  \end{align}
  almost everywhere. Therefore, for any $\varphi_l \in \Fs_l$ one has
  \begin{align}
      \Bigl|\bspl\dv\bigl(\Fln{l}
	- \flux{i}\bigr),\varphi_l \bspr\Bigr|
	&\leq \frac{L_l}{\tau}\bigl\|\epli{i} - \epli{i-1} \bigr\|\bigl\|\varphi_l\bigr\|
	%
+ \frac{1}{\tau}\bigl\| S_l(p_l^n)- S_l(\plni{i-1})\bigr\| \bigl\| \varphi_l \bigr\|.
      \label{giconvergencestep3}
  \end{align}
  Abbreviating the left hand side of (\ref{giconvergencestep3}) as $\bigl|\Psi_l^{n,i}\bigl(\varphi_l\bigr)\bigr|$,
  (\ref{giconvergencestep3}) means
  \begin{align}
    \sup_{\stackrel{\varphi_l \in \Fs_l}{\varphi_l \neq 0}}
    &\frac{\bigl|\Psi_l^{n,i}\bigl(\varphi_l\bigr)\bigr|}{\norm{\varphi_l}_{\Fs_l}}
    \leq \frac{L_l}{\tau}\bigl\|\epli{i} - \epli{i-1} \bigr\|
    %
    + \frac{1}{\tau}\bigl\| S_l(p_l^n)- S_l(\plni{i-1})\bigr\|
    \longrightarrow 0 \quad (i \rightarrow \infty) \label{giconvergencestep4}
  \end{align}
  as a consequence of (\ref{mainproofstep6b}). In other words $\bigl\|\Psi_l^{n,i} \bigr\|_{\Fs_l'} \rightarrow 0$ as
$i
  \rightarrow \infty$.
  Starting again from (\ref{mainproofstep1}) (without the added zero term), this time however
inserting   $\varphi_l \in \Fs_l$, integrating by parts and keeping in mind (\ref{giconvergencestep2}) one gets
  \begin{align}
    \spl \egli{i},\varphi_l \spr_\Gamma
      = - \lambda &\spl\epli{i}, \varphi_l \spr_\Gamma 
      %
      +\bspl \bigl[ \Fln{l}- \flux{i} \bigr]\cdot\vt{n_l}, \varphi_l
  \bspr_\Gamma . \label{giconvergencestep5}
  \end{align}
    We already know that $\bigl\|\epli{i}\bigr\|_{\Fs_l} \rightarrow 0$ as $i \rightarrow 0$ so by the
continuity of the trace operator the first term on the right vanishes in the limit.
For the last summand in (\ref{giconvergencestep5}) we use the
integration by parts formula to obtain
  \begin{align}
    \bspl \bigr[ \Fln{l}- \flux{i}  \bigr]\cdot\vt{n_l},\onGamma{\varphi_l}
  \bspr_\Gamma
    &= \Psi_l^{n,i}(\varphi_l)
      + \bspl \Fln{l}- \flux{i}, \bm{\nabla}\varphi_l\bspr.
  \label{giconvergencestep6}
  \end{align}
While the first term on the right approaches 0, the second can be estimated by
  \begin{align}
    \Bigl|\bspl k_l \bigl(S_l(p_l^n)\bigr) \bm{\nabla} \bigl( p_l^n + z \bigr)
    &- k_l\bigl(S_l(p_l^{n,i-1})\bigr)\bm{\nabla} \bigl( p_l^{n,i} + z \bigr)
,\bm{\nabla}\varphi_l\bspr\Bigr|
\nonumber   \\%
    %
    &\leq  L_{k_l} M \bigl\|S\bigl(p_l^n\bigr) - S\bigl(p_l^{n,i-1}\bigr)\bigr\| \bigl\|\varphi_l\bigr\|_{\Fs_l} +
\bigl\|   \bm{\nabla} p_l^{n,i}\bigr\| \bigl\|\varphi_l \bigr\|_{\Fs_l},
  \end{align}
  where we used the same reasoning as in (\ref{mainproofstep2b}).
  With this we let $i \rightarrow \infty $ in (\ref{giconvergencestep6}) to obtain
  \begin{align}
    \sup_{\stackrel{\varphi_l \in \Fs_l}{\norm{\varphi_l}_{\Fs_l} = 1}}&
    \Bigl|\bspl \bigr[ \Fln{l}- \flux{i} \bigr]\cdot\vt{n_l},\varphi_l\bspr_\Gamma\Bigr|
    \leq \bigl\|\Psi_l^{n,i}\bigr\|_{\Fs_l'}
    %
    + L_{k_l} M \bigl\|S\bigl(p_l^n\bigr) - S\bigl(p_l^{n,i-1}\bigr)\bigr\|  + \bigl\| \bm{\nabla} p_l^{n,i}
\bigr\|
  \longrightarrow 0 . \label{giconvergencestep7}
  \end{align}

  Finally, using the above and letting  $i \rightarrow \infty $ in  (\ref{giconvergencestep5}) gives
  \begin{align*}
    \sup_{\stackrel{\varphi_l \in \Fs_l}{\varphi_l \neq 0}}
    &\frac{\bigl|\spl \egli{i},\varphi_l \spr_\Gamma\bigr|}{\norm{\varphi_l}_{\Fs_l}}
    %
  \longrightarrow 0 .
  \end{align*}
  This shows $\egli{i} \rightarrow 0$ in $\Fs_l'$ for both $l$ and concludes the proof.
\end{proof}

\begin{remark}
  Note that Theorem \ref{mainresult} states that if a solution to the semi-discrete coupled problem exists, then it is the limit of
  the iteration scheme. Since in the convergence proof we use the existence of a solution to
  Problem \ref{WeakSemiDiscreteFormulation}, the argument cannot be used to prove existence. The difficulty lies in the fact that the nonlinearities encountered in the diffusion terms are space dependent and may be discontinuous w.r.t. $x$ over the interface.
\end{remark}

\section{Numerical Experiments} \label{SectionNumericalExperiments}
%
This section is devoted to numerical experiments and the implementation of the proposed domain decomposition
L-scheme.
As our formulation and analysis did not specialise to a
particular spacial discretisation, the numerical implementation of the LDD scheme can in principal be done with finite
difference, finite elements as well as finite volume schemes. Since mass conservation is an essential feature of porous
media flow models, we adopted a cell-centred two point flux approximation variant of a finite volume scheme to reflect
this on the numerical level. The domain $\dom$ is assumed to be rectangular and a rectangular uniform mesh was
used.
\begin{remark}[different decoupling formulations revisited]
  We saw in Remark \ref{RemarkFormulationVariants} that
  another decoupling formulation is possible.
  In fact, this can be taken a step further. Equations (\ref{strongiterschemeeqn2}), (\ref{stronggliupdate}) as well as
  (\ref{strongiterschemeeqn2'}), (\ref{stronggliupdate'}) can be
  embedded into a combined formulation.
  For some  $ 0 < \eta < 1$ and $M > 0 $, consider the generalised decoupling
  \begin{align}
    \flux{i} \cdot \vt{n_l} %
      &=M \left[(1-\eta)g^i_l + \eta p_l^{n,i}\right], \tag{{\ref{strongiterschemeeqn2'}}'}
      \label{strongiterschemeeqn2''}\\%
    (1-\eta)g_l^i&=-2\eta p_{3-l}^{n,i-1}-(1-\eta)g_{3-l}^{i-1}. \tag{\ref{stronggliupdate'}'} \label{stronggliupdate''}
  \end{align}
  Observe that the $\lambda$-formulation (\ref{strongiterschemeeqn2}), (\ref{stronggliupdate}), as well as the
  convex-combination formulation (\ref{strongiterschemeeqn2'}), (\ref{stronggliupdate'}), are special ca\-ses of this
general
  formulation:
  In particular, $M=(1-\eta)^{-1}$ and $\lambda=\eta (1-\eta)^{-1}$ recovers the $\lambda$-formulation,
  $M=1$ and $\eta = \lambda$ yields the convex-combination formulation.
  Although (\ref{strongiterschemeeqn2''})  and (\ref{stronggliupdate''}) might give even greater parametric control
  over the numerics, in this paper we adhere to the $\lambda$-formulation because of its simplicity. 
  Fig. \ref{figDDnum.lambda} and Fig.  \ref{figDDnum.eta} show the  influence of $\lambda$ and $\eta$ in both
  formulations.
\end{remark}
We start by considering an analytically solvable example. The LDD scheme is tested against other frequently used schemes that do not use a domain
decomposition. All of them are defined on the entire domain and the continuity of normal
flux and pressure over $\Gamma$ is maintained implicitly.
The first scheme to be compared is a finite volume implementation of the original L-scheme on the whole domain (see 
\cite{Slodicka2002,Pop2004365,List2016}), henceforth referred to as LFV scheme. Comparison is also drawn to the modified Picard scheme, (which performs better than the Picard method, see \cite{Celia1990}), which is given by
\begin{align}
  S'_l\bigl(p^{n,i-1}_l\bigr) &\Bigl( p^{n,i}_l-p^{n,i-1}_l \Bigr)%
    +\tau\vt{\nabla}\cdot \flux{i}
   = \tau f_l -\Bigl( S_l\bigl( p^{n,i-1}_l \bigr) - S_l\bigl( p^{n-1}_l \bigr) \Bigr)  %
   &&\text{ on } \Omega_l,\\
  \bigl\llbracket \flux{i} \cdot \vt{ n_1 } \bigr\rrbracket
     &= 0
      &&\text{ on } \Gamma.
\end{align}
Here, the brackets  $\llbracket \cdot \rrbracket$  denote the jump over the interface. Finally, a comparison with the
quadratically convergent Newton scheme is made. Writing
$\delta p^i_l=p^{n,i}_l-p^{n,i-1}_l$, it reads as follows:
\begin{align}
  S'_l &\bigl( p^{n,i-1}_l \bigr)\delta p^i_l
    -\tau\vt{\nabla}\cdot \Bigl[ k_l\bigl( S_l(p^{n,i-1}_l) \bigr) \vt{\nabla} \delta p^i_l
    %
    + k'_l \bigl( S_l(p^{n,i-1}_l) \bigr) S'_l \bigl( p^{n,i-1}_l \bigr) \delta p^i_l
\gradPlniPlusGravity{i}
\Bigr]
    \nonumber\\
  &=\tau f_l - \Bigl( S_l \bigl( p^{n,i-1}_l \bigr)-S_l \bigl( p^{n-1}_l \bigr)\!\Bigr)
  - \tau \vt{\nabla}\cdot \Bigl(k_l \bigl( S_l(p^{n,i-1}_l)   \bigr) \gradPlniPlusGravity{i-1} \Bigr)
  \quad \text{ on }   \Omega_l \\
  \Bigl\llbracket k_l&\bigl( S_l ( p^{n,i-1}_l ) \bigr) \vt{\nabla} \delta p^i_l\cdot\vt{n_1} \Bigr\rrbracket %
  %
  + \left\llbracket k_l \bigl( S_l(p^{n,i-1}_l) \bigr)' \delta p^i_l \gradPlniPlusGravity{i-1}
\cdot\vt{n_1}
  \right\rrbracket
  \nonumber \\%
    &=- \left\llbracket k_l \bigl( S_l(p^{n,i-1}_l) \bigr) \gradPlniPlusGravity{i-1} \cdot\vt{n_1} \right\rrbracket%
    \quad\text{ on } \Gamma.
\end{align}
We refer to \cite{List2016} for a recent study on linearisations for Richards equation.

\begin{figure}[htbp]
  \centering
  \includegraphics[width=0.6\textwidth]{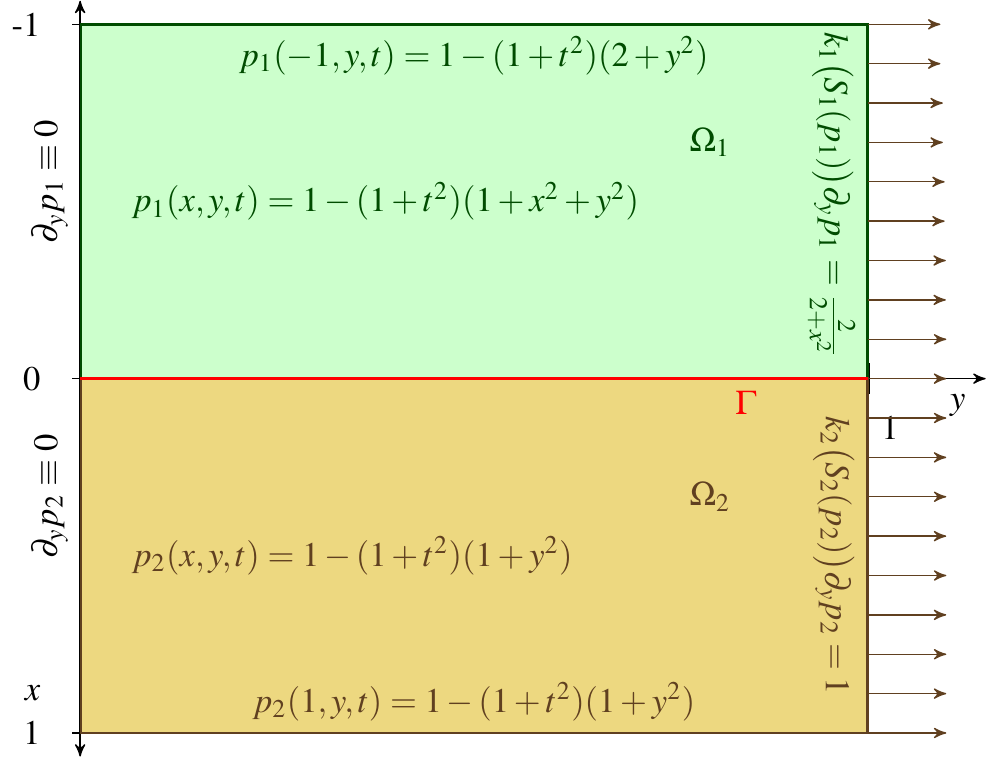}
  \caption{The domain used in the numerical examples. The boundary conditions are given in Table \ref{ICBCtestcase}. The exact solution is also given in each subdomain.}
  \label{numerical_domain}
\end{figure}

\subsection{Results for a case with known exact solution} \label{SubsectionAnaExample}
To demonstrate the robustness of the proposed scheme, we solve
(\ref{ClassicalFormulation1})--({\ref{ClassicalFormulation4}) with both Di\-rich\-let and
Neumann type boundary conditions. In the first case we disregard gravity. Specifically, we consider
\begin{align}
 \dom_1 = (-1,0)\times(0,1), \quad \dom_2 = (0,1)\times(0,1), \quad \text{ and } \Gamma = \{0\} \times [0,1].
\end{align}
%
The relative permeabilities are $k_1(S_1)= S_1^2$ on $\dom_1$,
$k_2(S_2)= S_2^3$ on $\dom_2$ and the saturations
\begin{align}
  S_l(p)= \begin{cases}
	    \frac{1}{(1-p)^\frac{1}{l+1}} 	&\mbox{ for } p < 0,	\\
	    1 					&\mbox{ for } p \geq 0
	  \end{cases},~\qquad l=1, 2.%
  %
\end{align}
The boundaries and right hand sides are chosen to make the exact solution
\begin{equation*}
  \begin{aligned}
    p_1(x,y,t)&=1-(1+t^2)(1+x^2+y^2), 		&& t>0,\, (x,y) \in \dom_1, \\
    p_2(x,y,t)&=1-(1+t^2)(1+y^2),		&& t>0,\, (x,y) \in \dom_2,
  \end{aligned} 
\end{equation*}
and this corresponds to the right hand sides
\begin{equation*}
  \begin{aligned}
    f_1(x,y,t) &= \frac{4}{(1+x^2+y^2)^2}-\frac{t}{\sqrt{(1+t^2)^3(1+x^2+y^2)}},	\\
    f_2(x,y,t) &= \frac{2(1-y^2)}{(1+y^2)^2}-\frac{2t}{3\sqrt[3]{(1+t^2)^4(1+y^2)}},   
  \end{aligned}
\end{equation*}
for $t>0$, and $(x,y) \in \dom_l$ respectively.
The boundary and initial conditions are summed up in Table \ref{ICBCtestcase}.
%
%
\begin{wrapfigure}[11]{r}{0.47\textwidth}
    \vspace{-0.32cm}
    \centering
    \includegraphics[width=0.47\textwidth]{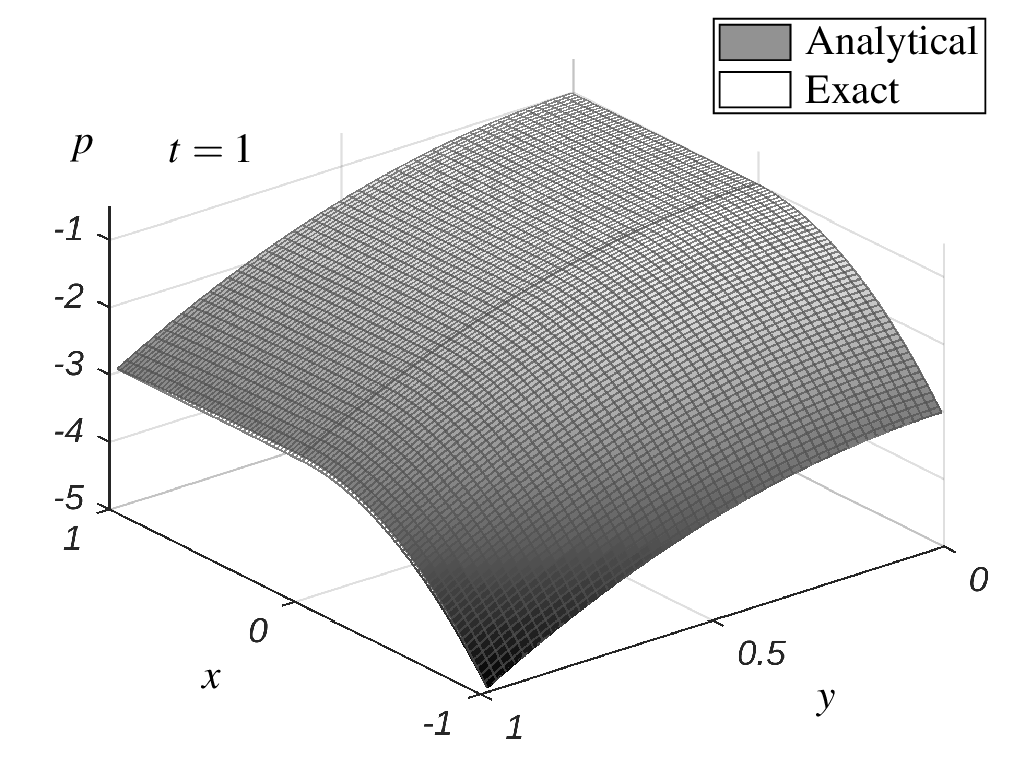}
    \caption{Comparison between the exact pressure and the numerical pressure provided by the LDD scheme.}
    \label{fignum1a}
\end{wrapfigure}
%
\begin{table}[tbp]
    \centering
\begin{tabular}{| c | c | c |}
\hline 
  %
    & \boldmath $\dom_1$
      & \boldmath$\dom_2$	\\
\hline 
  %
  $t=0$
  & $p_1(x,y,0)=-(x^2+y^2)$
    &$p_2(x,y,0)=-y^2$	\\
  BC%
  %
  $y=0$
    &$\partial_y p_1 = 0$
      &$\partial_y p_2 = 0$ \\
  $y=1$
    &$k_1\bigl(S_1(p_1)\bigr)\partial_y p_1=\frac{2}{2+x^2}$
      &$k_2\bigl(S_2(p_2)\bigr)\partial_y p_2=1$\\
  $x=-1$
    &$p_1(-1,y,t)=1-(1+t^2)(2 + y^2)$
      &					\\
  $x=1$
    &
      &$p_2(1,y,t)=1-(1+t^2)(1 + y^2)$\\
\hline
\end{tabular}

    \caption{Initial and boundary conditions for the example with exact solution.}
    \label{ICBCtestcase}
  \end{table}
%
%
All linear systems were solved using a restarted generalised minimum residual method (gmres) \cite{Barrett1994}.
%
%
To boost up speed, sparse triplet format was used in the matrix computation. The programs are implemented in ANSI C.
For the implementation we took the same $L_l$ in both sub-domains, i.e. $L:=L_1=L_2$.
%
%
%
%
The results are shown in Figures \ref{fignum1a} and \ref{fignum1b}. Fig. \ref{fignum1a} shows the pressure distribution of the exact solution
$p := \chi_{\dom_1}\cdot p_1 + \chi_{\dom_2}\cdot p_2 $ with the numerical solution $p^{n,i} := \chi_{\dom_1}\cdot
p_1^{n,i} + \chi_{\dom_2}\cdot p_2^{n,i} $ plotted on top of it.
For $\Delta x=10^{-2}$, $\Delta t=2\cdot  10^{-4}$ as well as parameters $L=0.25$ and $\lambda =4$, the maximum
relative error was less than $0.03 \%$, i.e. $\bigl\|\tfrac{p^n-p^{n,i}}{p^n}\bigr\|_{L^\infty(\dom)} < 0.0003$.
The relative errors of the LDD, LFV and Newton schemes at the mid-line $y=0.5$ are plotted in Fig. \ref{fignum1b}.
The LDD scheme preserves the flux continuity and pressure continuity at the interface at every time
 step without having to solve for the entire domain. We test this theory numerically.
Fig. \ref{figL2LinftyErrorsDomainAndInterface.analytical} shows how different kinds of errors behave
within one time step. The errors $\norm{p^{n,i} - p^{n,i-1}}_{L^2(\dom)}$, $\norm{p^{n,i} -
p^{n,i-1}}_{L^\infty(\dom)}$   defined on the domain $\dom$, as well as
$\bigl\|\bigl\llbracket p^{n,i} \bigr\rrbracket\bigr\|_{L^2(\Gamma)}$ and $\bigl\|\bigl\llbracket
\vt{F^{n,i}_l}\cdot \vt{n_l} \bigr\rrbracket\bigr\|_{L^2(\Gamma)}$ defined on the interface
$\Gamma$, are shown.
We observe that the flux
and pressure jump tend to zero which implies that flux and pressure
continuity is achieved. 
Note that the flux at $x=0$ from the exact solution is $0$.
Next, we compare the LDD scheme with other schemes and study their dependence on discretisation
parameters. We compare the Newton scheme, the (modified) Picard iteration, the already mentioned LFV
scheme and the LDD scheme, investigating the dependence of time step refinement and space grid refinement separately.
\begin{figure}[bhp]
  \begin{subfigure}[t]{0.48\textwidth}
    \centering
    \includegraphics[width=\textwidth]{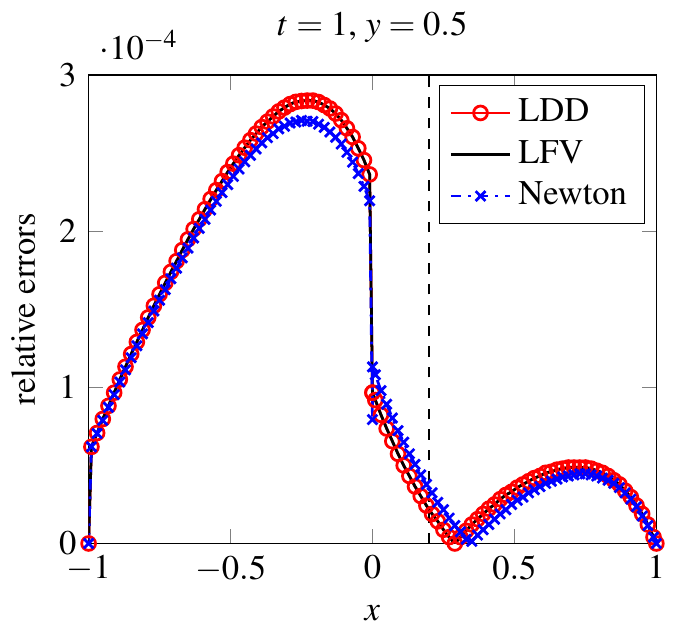}
    \caption{Comparison between the numerical solutions provided by the LDD, LFV and the Newton schemes. Plotted are
    the relative errors %
    $\bigr\|\frac{p_{\mbox{\tiny exact}}-p_{\mbox{\tiny num}}}{p_{\mbox{\tiny exact}}}\bigr\|$ as
    functions of $x$, for $y=0.5$ and $t = 1$.}
    \label{fignum1b}
  \end{subfigure}~ 
  \begin{subfigure}[t]{0.48\textwidth}
    \centering
    \includegraphics[width=\textwidth]{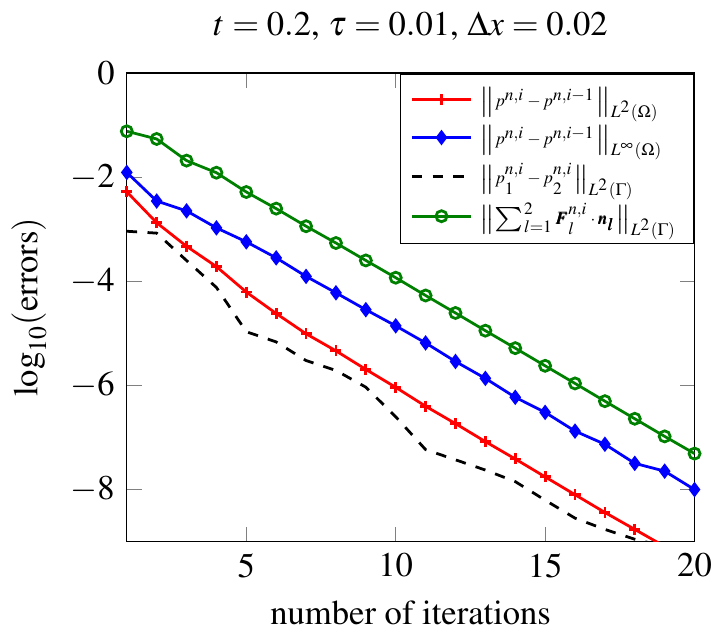}
    \caption{Different errors vs inner iterations for the case with exact solution. Here $t=0.2$, $L=0.25$ and $\lambda=4$.}
    \label{figL2LinftyErrorsDomainAndInterface.analytical}
  \end{subfigure}
\end{figure}

%

The first study, shown in Fig. \ref{fig_DDnum.mesh}, plots $\log_{10}\bigl(\|p^{n,i}
-p^{n,i-1}\|_{L^2(\dom)}\bigr)$ for all schemes, at the fixed time step corresponding to $t=0.2$. As expected, Newton is
the fastest and shows a quadratic convergence rate. But at the same time, it is most susceptible to change in mesh size
as observed from the slopes of the left-most curves. The convergence rate of the Picard iteration is linear, faster
than
both the $L$-schemes and is stable with respect to variation in mesh size. The  L-schemes also exhibit linear
convergence, albeit slower than Picard, and the
convergence speed does not vary much with mesh size. LFV and LDD schemes have practically the same convergence rate.
Table \ref{speedcomparisontable} complements the plot in Fig. \ref{fig_DDnum.mesh} and
lists experimental average convergence rates, defined as $\|e^{n,i+1}_p\|/\|e^{n,i}_p\|$, for all
schemes (Newton data is not shown for $\Delta x=0.1$, $0.05$, $0.02$ as it reaches an error lower
than $10^{-10}$ in 3 iterations).
%
\begin{figure}[tbp]
  \centering
  \includegraphics[width=0.94\textwidth]{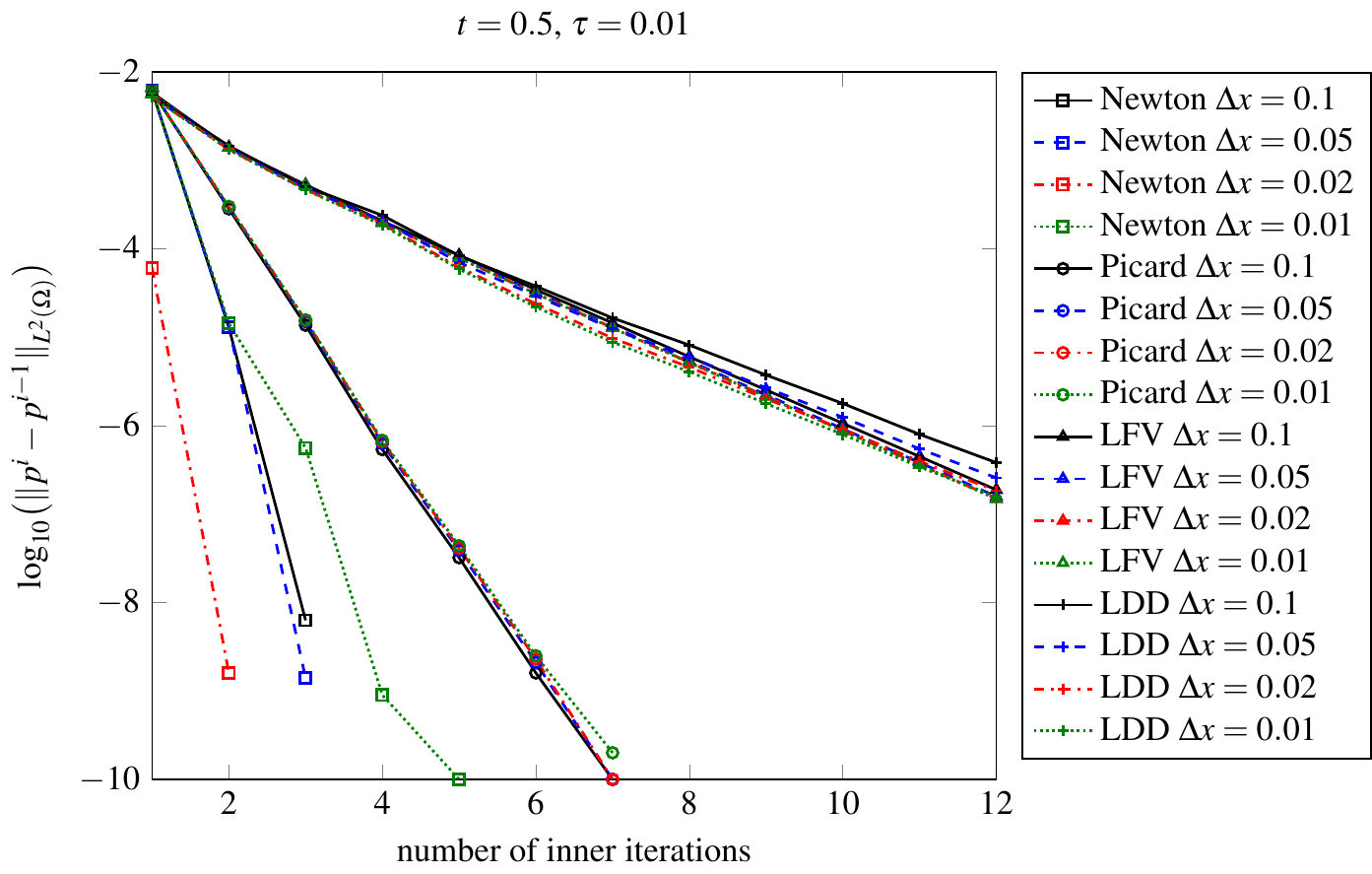}
  \caption{Performance comparison and mesh study for the convergence of the LDD, LFV, Picard  and Newton schemes. Here $L=0.25$
and $\lambda=4$.}
  \label{fig_DDnum.mesh}
\end{figure}

\begin{wraptable}{R}{0.5\textwidth}
  \centering
\begin{tabular}{| c | c | c | c | c|} 
\hline 
  $\Delta x$ &\textcolor{black}{Newton} &\textcolor{black}{Picard}
  &\textcolor{black}{LFV}
  &\textcolor{black}{LDD} \\
  \hline
  $0.1$  &- &0.0504 &0.4046 & 0.4400\\
  $0.05$ &- &0.0504 &0.3906 &0.4270\\
  $0.02$ &- &0.0505 &0.3909 &0.4221\\
  $0.01$ &   0.0113 &0.0567 &0.3910 &0.4221\\
  Type &Quadratic &Linear &Linear &Linear \\
  \hline
\end{tabular}

  \caption{The average convergence rate, $\|e^{n,i+1}\| / \|e^{n,i}\|$, for the different schemes and with respect
  to the mesh-size.}\label{speedcomparisontable}
\end{wraptable}
Secondly, we study the dependence of the convergence rates on time step size for a fixed mesh size
($\Delta x = 0.02$). The error characteristics of all four schemes in Fig. \ref{fig_DDnum.timestudy} are
shown for $t=0.5$. In Fig. \ref{fig_DDnum.timestudy1} both, Newton and Picard, diverge, whereas both
$L$-schemes converge for $L=0.25$. The LFV scheme exhibits some oscillations, the  reason  being the
dependence of the choice of $L$ on the time step $\tau$. Higher values of $\tau$ might require higher values
of $L$. Indeed, if we substitute $L=0.5$ in the LFV scheme (marked as LFV* in the diagram), we see a
more robust behaviour. Note, that the LDD scheme converges for all $\tau$ and is at least as fast as
the LFV scheme in all the cases.
For smaller values of $\tau$ the Newton and Picard iteration converge faster than both L-schemes, as shown in Figures \ref{fig_DDnum.timestudy2} and \ref{fig_DDnum.timestudy3}.
\begin{figure}[bthp]
  \begin{subfigure}[b]{.48\textwidth}
  \centering
  \includegraphics[width=\textwidth]{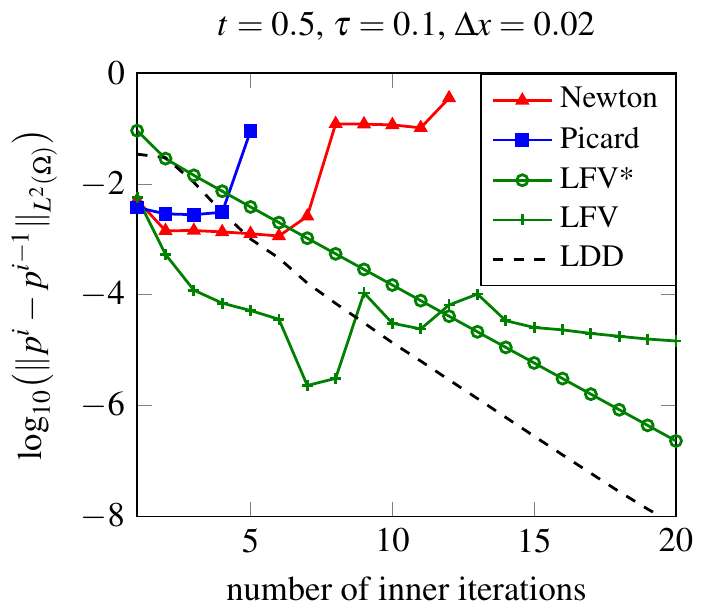}
  \caption{$\Delta t =0.1$}
  \label{fig_DDnum.timestudy1}
\end{subfigure}~
\begin{subfigure}[b]{.48\textwidth}
  \centering
  \includegraphics[width=\textwidth]{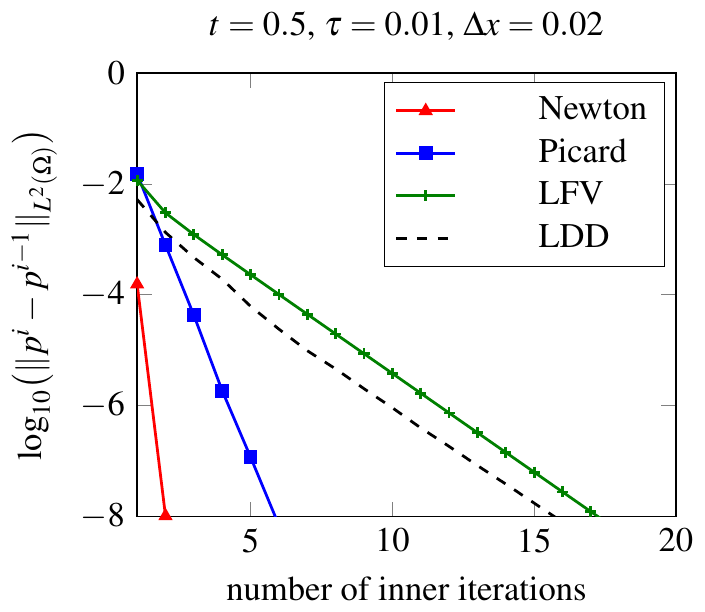}
  \caption{$\Delta t =0.01$}
  \label{fig_DDnum.timestudy2}
\end{subfigure} 
\centering
\begin{subfigure}[b]{.48\textwidth}
  \centering
  \includegraphics[width=\textwidth]{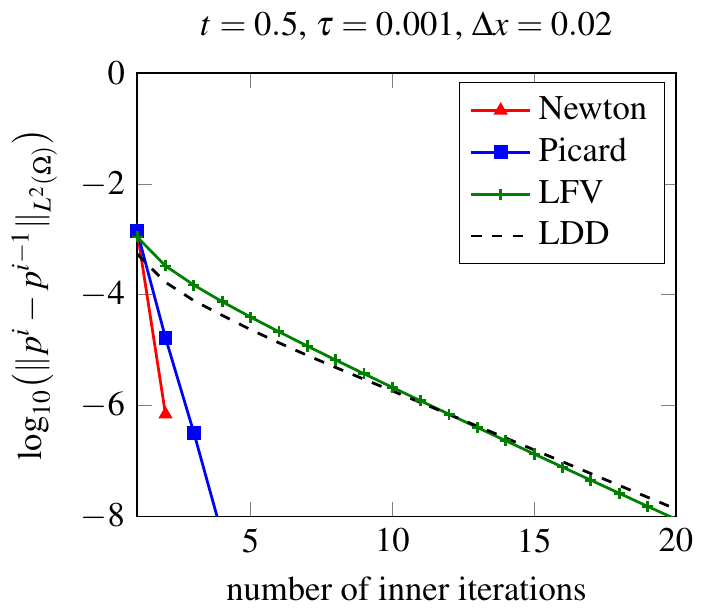}
  \caption{$\Delta t =0.001$}
  \label{fig_DDnum.timestudy3}
\end{subfigure}
  \caption{Convergence study for the time-steps $\tau=0.1$, $0.01$, $0.001$. Here, $L=0.25$ for the LFV scheme and $L=0.5$
  for the LFV* scheme. For the LDD scheme one has $L=0.25$, $\lambda=2$ in case \ref{fig_DDnum.timestudy1}, $L=0.25$, $\lambda=4$ in case
  \ref{fig_DDnum.timestudy2}, and $L=0.25$, $\lambda=10$ in case \ref{fig_DDnum.timestudy3}.}
  \label{fig_DDnum.timestudy}
\end{figure}
%
\begin{wrapfigure}[13]{R}{0.5\textwidth}
  \includegraphics[width=0.49\textwidth]{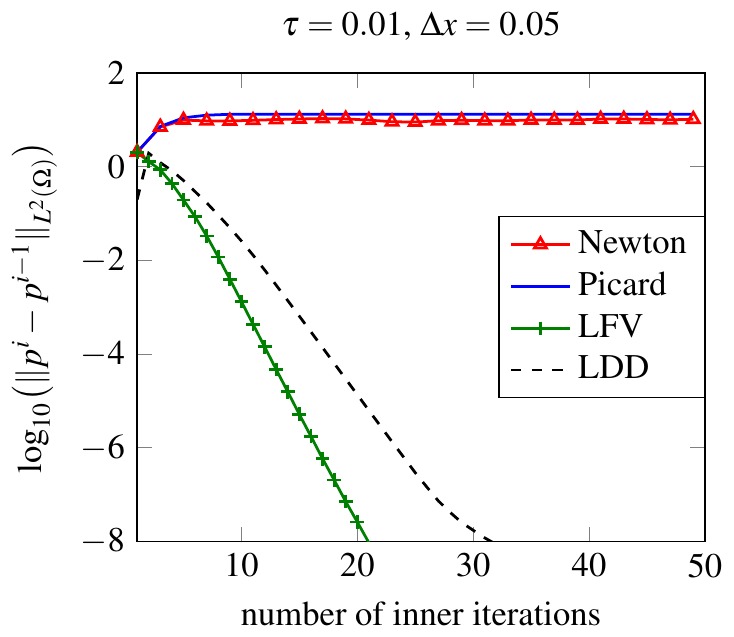}
  \caption{Error decay for the different schemes for a constant initial guess, $p^{n,0}=-5$. Here $L=0.25$, $\lambda=4$.}
  \label{fig_DDnum.ConstantIniGuess}
\end{wrapfigure}

According to the theory, the convergence of the Newton and Picard schemes is only guaranteed if the initial guess is close enough to the exact solution. Therefore, starting the iteration with the numerical solution at the previous time step this suggests that the time step should be taken small enough to have a guaranteed convergence (see \cite{Park1995,Radu2006,List2016}. Contrariwise, L-schemes are free of this constraint.

To illustrate this behaviour, we have investigated the convergence of the schemes for a constant initial guess. Specifically, $p^{n,0}=-5$ has been used instead of $p^{n,0}=p^{n-1}$. In this case, the Newton and Picard schemes are divergent whereas both L-schemes still produce a good approximation after several iterations. This is displayed in Fig. \ref{fig_DDnum.ConstantIniGuess}. A similar behaviour will be observed again while discussing a numerical example with realistic parameters.

\begin{remark} \label{RemarkOnLambdaDependece}
  The convergence behaviour of the LDD scheme can be optimized by choosing $\lambda$ properly. In the above
  comparison $\lambda$ was chosen differently for every choice of mesh size. The optimality of $\lambda$ is dependent
  on the mesh and the time step size. With a good choice of $\lambda$, one can make the LDD scheme at least as fast as the LFV
  scheme. This is discussed in more detail in Section \ref{SubsectionParamDependenceAndKeyFeatures}.
\end{remark}

\subsubsection{Results for a realistic case with van Genuchten parameters}\label{SectionRealisticCase}

We demonstrate the applicability of the LDD scheme for a case with realistic parameters, incorporating also gravity effects. We consider a van-Genuchten-Mualem parametrisation \cite{VanGenuchten1980} with the curves $k$ and $S$
  \begin{equation}
    \begin{aligned}
      S_l(p)    &= S_{l,r}+(S_{l,s}-S_{l,r})\Phi_l(p),                          \\
      \Phi_l(p) &= \dfrac{1}{\bigl(1+(-\alpha_l p)^{\hat{n}_l}\bigr)^{m_l}}, \qquad                    
m_l=1-\dfrac{1}{\hat{n}_l},\\
      k_l(S)    &= \sqrt{\Phi_l(p)}\Bigl(1-\bigl(1-\Phi_l(p)^{\frac{1}{m_l}}\bigr)^{m_l}\Bigr)^2.   
    \end{aligned}
  \end{equation}
  The specific parameter values are listed in
  Table \ref{RealisticTestCaseParameters} and are characteristic for particular types of materials, {\itshape  silt loam
  G.E. 3} ($\Omega_1$) and {\itshape sandstone} ($\Omega_2$). These materials have very different absolute permeabilities $\kappa_1, \kappa_2$, which makes the numerical calculations more challenging.

The dimensional governing equations and boundary
  conditions  become ($l=1,2$)
  \begin{align}
    L_l p_l^{n,i}+\tau\nabla&\cdot\flux{i}
    =L_l p_\ell^{n,i-1}
    \nonumber \\
    &- \phi_l \bigl( S_l(p_l^{n,i-1})- S_l(p_l^{n-1}) \bigr), %
        && \text{ on } \Omega_l, \\
    \flux{i}\cdot \vt{n_l}%
      &= g^i_l + 2\lambda p_l^{n,i}, %
        &&\text{ on } \Gamma,\\
     p_l^{n,i},
     &=0 %
       &&\text{ on } \partial\Omega_l.
\end{align}
In this case $\flux{i} = -\frac{\kappa_l}{\mu}k_{r,l} \bigl( S_l(p_l^{n,i-1}) \bigr) \bigl(\bm{\nabla} p_l^{n,i}
-\rho\vt{g}\bigl)$. Here $\vt{g} = g\vt{e_x}$ is the gravitational acceleration aligned with the positive $x$-direction,
$\rho$, $\mu$ are the density and the viscosity of the fluid and $\kappa_l$, $\phi_l$ are the absolute permeability as well as
the porosity of the medium. Note that Fig.
\ref{numerical_domain} is rotated by 90 degrees.
\begin{table}[hbtp]
  \centering
  \arrayrulewidth=0.6pt
\begin{tabular}{| c | c | c | c |} 
 \hline 
    Parameter 	&  Unit		& Silt Loam G.E. 3  $(\dom_1) $	
      & Sandstone $(\dom_2)$	\\
      \hline
    Porosity ($\phi_l$)	& - & $0.35$ 	&    $0.35$ 		\\
    Water Density ($\rho$) & kg m$^{-3}$ 	& $1\times10^{3}$ 	&$1\times10^{3}$ \\
    Water Viscosity ($\mu$) 			& Pa$\cdot$s 	& $1\times10^{-3}$ 	&
    $1\times10^{-3}$ 	\\
    Absolute permeability ($\kappa_l$)		& m$^2$s  	& $5.7407\times10^{-14}$ 	&
    $1.2500\times10^{-12}$ 	\\
    Retention exponent ($\hat{n}_l$) 			& - 		& $2.06$ 		&
    $10.4$ 		\\
    Retention parameter ($\alpha_l$) 		& Pa$^{-1}$ 	& $4.23\times10^{-5}$ 	&
    $7.90\times10^{-5}$ 	\\
    Irreducible water saturation ($S_{l,r}$) 	& - 		& $0.131$ 		&
    $0.153$ 		\\
    Irreducible air saturation ($1-S_{l,s}$)   	& - 		& $0.604$ 		&
    $0.75$ 		\\
\hline
\end{tabular}

  \caption{The van Genuchten-Mualem parameters in the realistic test case.}
  \label{RealisticTestCaseParameters}
\end{table}
The problem is nondimensionalised by using the characteristic pressure $p^*:=-14.8\times10^3$Pa, length $1.48$m and
time $41.440$s. This leads to the nondimensional quantities $\tilde{p}$, $(x,y)$ and $t$.  After nondimensionalisation, the domain used is again taken to be $\Omega_1=(-1,0)\times(0,1)$, $\Omega_2=(0,1)\times(0,1)$. 
The initial condition used is
\begin{align}
 \tilde{p}(x,y,0)=-1
\end{align}
and boundary conditions are
\begin{align*}
 &\tilde{p}(-1,y,t)=
 \begin{cases}
 -1+ty &\text{ if } y<(1-\epsilon)t^{-1}\\
 -\epsilon &\text{ if } y\ge(1-\epsilon)t^{-1}
 \end{cases},
 \\
&\tilde{p}(1,y,t)=-1,
\end{align*}
together with a no-flow condition at $y=0, 1$.
We take $\epsilon > 0 $ to avoid degeneracy. 

%
\begin{figure}[h]
  \begin{subfigure}[b]{.49\textwidth}
    \centering
    \includegraphics[width=\textwidth]{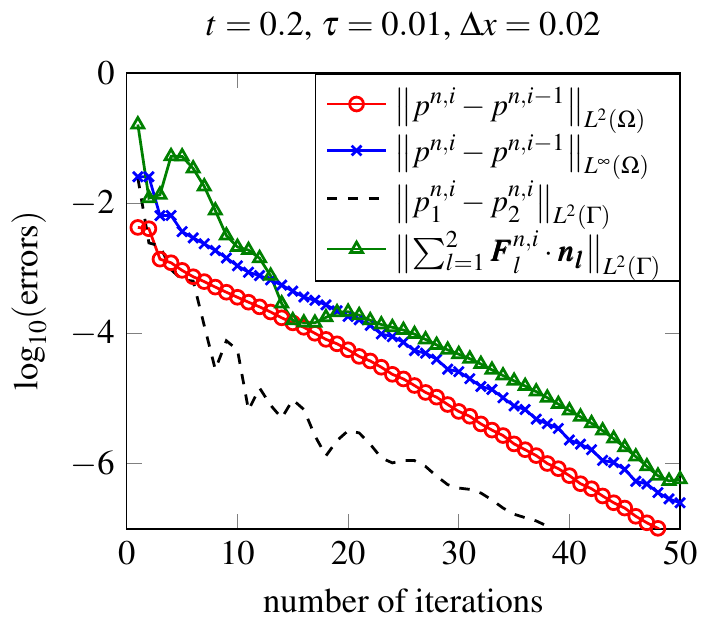}
    \caption{Different errors vs inner iterations for the realistic case at $t=0.2$. The parameters are $\tau=0.01$, $\Delta
x=0.02$, $L_l=0.25$ and
  $\lambda=10$. Only the LDD scheme is shown in this
  plot.}
    \label{fig_DDnum.diff.err.real}
  \end{subfigure}~
  \begin{subfigure}[b]{.49\textwidth}
    \centering
    \includegraphics[width=\textwidth]{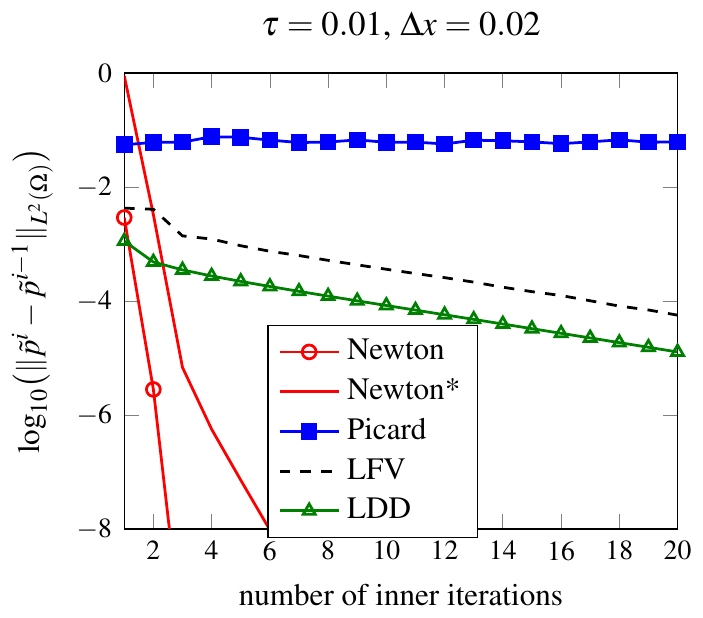}
    \caption{Error vs inner iterations for the realistic case. LDD , LFV and Newton errors are plotted at $t=0.2$.
    Newton$^*$ denotes the errors of Newton scheme at $t=0.9$. Picard is plotted at $t=0.02$.  Here, $L=0.5$,
    $\lambda=10$.}	
    \label{fig_DDnum.diff.scheme.real}
  \end{subfigure}
  \caption{Error plots and scheme comparison for the realistic case.}
  \label{fig_DDnum.real}
\end{figure}
Fig. \ref{fig_DDnum.diff.err.real} shows the different errors for this case and it can be seen that all the errors
are decreasing for the LDD scheme. Errors at the interface and inside the domain tend to $0$, the convergence is slower
compared to the case with exact solution, however. This is due to the large variance of the parameters as well as the highly
nonlinear nature of the associated functions. Because of this, both Newton and Picard schemes diverge. The behaviour
of
different schemes for the same set of parameters is shown in Fig. \ref{fig_DDnum.diff.scheme.real}. Observe that for
the
Newton scheme the starting error as well as the number of iterations required increases steadily with $t$ until
$t=0.94$, at which point the errors start diverging.
The Picard scheme becomes divergent even before $t=0.2$. In contrast, both L-schemes remain stable in this case.
\subsection{Time Performance} \label{SubsectionTimePerformance}
This section is devoted to the comparison of time performance of the schemes.  We have seen that L-schemes are more
stable than  Newton and Picard. But if they are converging, then Newton and Picard schemes converge faster than the
L-schemes. Below we investigate how the schemes compare to one another with respect to \emph{actual computational
time}. We set an error tolerance for the schemes that stops the iterations within one time step, after reaching an
error lower than $10^{-6}$, i.e. $\norm{p^{n,i}-p^{n,i-1}}_{L^2(\dom)} < 10^{-6}$. This is to ensure that we get comparative
CPU-clock-time for different schemes for the same degree of accuracy.

\begin{wraptable}{R}{0.49\textwidth}
  \centering
\begin{tabular}{| c | c | c | c |} 
   \multicolumn{4}{c}{Condition number}\\
   \hline
    {$\Delta x$} &{0.1} &{0.05} &{0.02}\\
    \hline
    L-DD ($\Omega_1$) &7.6191  &11.8947 &73.362 \\
    L-DD ($\Omega_2$) &7.0219  &12.3557 &74.519 \\
    L-FV ($\Omega$) 	&94.8158 &171.47  &397.34 \\
  \hline
\end{tabular}

  \caption{The condition number vs mesh size for the LDD and LFV schemes. Here, $\tau=0.001$, $t=0.2$, $L=0.25$, $\lambda=10$.
  The condition numbers are calculated for the $200^{th}$ time step for the matrices of the first inner
  iteration.}
  \label{DDnum.cond.table}
\end{wraptable}

We computed the exactly solvable case on a LINUX server ({mammoth.win.tue.nl}) for all four schemes using the same set
of parameters ($\Delta x=0.02$, $\tau=0.001$, $L=0.25$ and $\lambda=10$). Figure \ref{fig:DDnum.timea}  illustrates the
time-performances of these schemes over the whole computational time domain. Table \ref{fig:DDnum.timeb} shows how many inner-iterations are required on average for different schemes to reach the error criterion at different points in
time. 

Iteration requirement per time step increases for all schemes as the boundary conditions change more rapidly
with time. Table \ref{fig:DDnum.timeb} shows the average time taken and how many gmres iterations (outer and inner)
were required by each scheme to execute one inner iteration.

Unsurprisingly, the Newton scheme is still fastest, followed by Picard and the LDD scheme. But LDD competes closely
with Newton and Picard. Even more surprising is the fact that the LFV scheme takes

\begin{wrapfigure}{R}{.5\textwidth}
    \centering
    \includegraphics[width=0.49\textwidth]{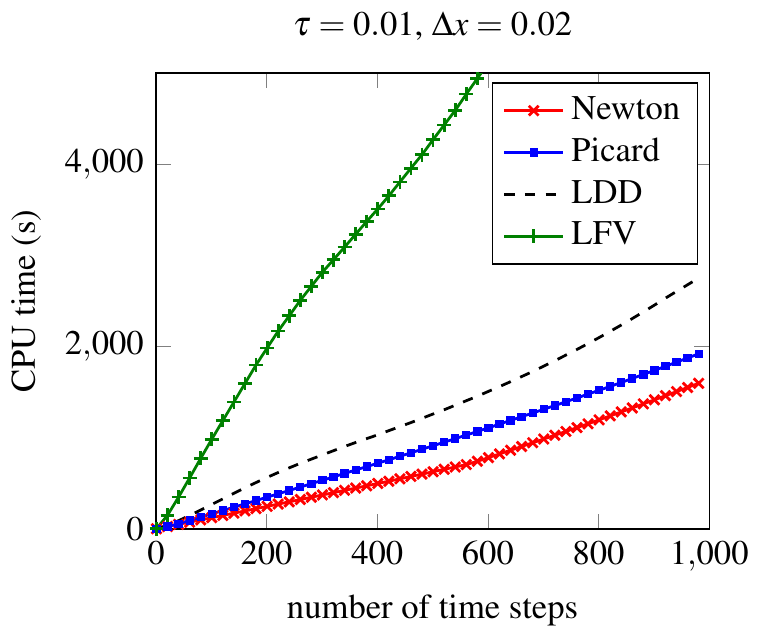}
    \caption{Time performance of the L-DD, L-FV and the Newton-FV schemes.}
    \label{fig:DDnum.timea}
\end{wrapfigure}
considerably more time to reach the
desired accuracy compared
to the LDD scheme, despite both having almost the same convergence rate. The reason becomes apparent from
Table \ref{fig:DDnum.timeb}: The LDD scheme requires much less time \emph{per inner iteration} than all other schemes.
The LFV scheme has the  second fastest average time per iteration. For the Picard iteration, the derivative of the saturation
function needs to be evaluated   which in turn costs more time than an iteration in the LFV scheme. The Newton scheme
is computationally most expensive per iteration because it calculates the Jacobian at every iteration.
\begin{table}[bpt]
  \centering
\begin{tabular}{| c | c | c | c | c |} 
    \multicolumn{5}{c}{Average inner iterations required}\\
 \hline
    Time-step/Scheme & {LDD} & {LFV} & {Picard} &
    {Newton}\\
    10 	&7.3 	&7.5 	&2.3 	&2.3\\
    50 	&9.880 	&10.72 	&2.520 	&2.060\\
    100 	&11.26 	&12.31 	&2.760 	&2.030\\
    500 	&11.19 	&11.79 	&2.952 	&2.006\\
    1000 	&14.18 	&14.35 	&2.946 	&2.408\\
    Avg. time per iter. 	& 0.1965 	& 0.5392	& 0.6591 & 0.6722\\
    Avg. GMRES iterations	& 119+ 123 	& 396.6 	& 390.9  & 397.7 \\
    \hline
\end{tabular}

  \caption{ The average number of inner iterations per time step required  by the different schemes to reach the stopping criterion  $\norm{p^{n,i}-p^{n,i-1}}_{L^2(\dom)} < 10^{-6}$. The last two rows give the average time and gmres-iterations per inner iteration.}
  \label{fig:DDnum.timeb}
\end{table}

The schemes that do not decouple the domain require much more time  and many more gmres-iterations per inner iteration.
The reason is that the domain decomposition schemes involve smaller matrices and and they have smaller condition
numbers. This is illustrated by the last row of Table \ref{fig:DDnum.timeb}. The LDD scheme requires on  average 119
gmres-iterations  on $\dom_1$ and 123 gmres-iterations on $\dom_2$ and both domains have $52\times 50$ elements. Compare this with Newton, which takes almost 400 gmres-iterations and deals with $104\times 50$ variables on each gmres-iteration. This explains
why the LDD scheme takes so much less time per inner iteration. Table \ref{DDnum.cond.table} compares the condition
numbers of the LDD and the LFV scheme. It shows that the matrices for the LFV scheme are worse conditioned than the ones of the LDD
scheme. The latter has two condition numbers, one for each domain. The 2-norm condition numbers were calculated with
 MATLAB's build in cond() function.

\begin{remark}
 The fact that the LDD scheme performance competes closely with Newton and Picard, means that, LDD can potentially
be made much   faster than even Newton as it is parallelisable. This is the key advantage of the LDD scheme along with its
global convergence property.
\end{remark}
\subsection{Parameter dependence and key features} \label{SubsectionParamDependenceAndKeyFeatures}

Having outlined the robustness and speed of the proposed LDD scheme we turn to investigate some of its properties. Two
important parameters have been introduced in the L-DD scheme, i.e. $L_l$ and $\lambda$, and apart from
a lower bound on $L_l$ nothing has been specified about these parameters. This means that they can freely be adjusted
to give optimal convergence rate. In fact, in this section we will see that the convergence rate depends strongly on
these parameters.

\subsubsection*{The influence of \texorpdfstring{$\lambda$}{lambda}}

\begin{figure*}[tp]
  \begin{subfigure}[b]{.49\textwidth}
    \centering
    \includegraphics[width=\textwidth]{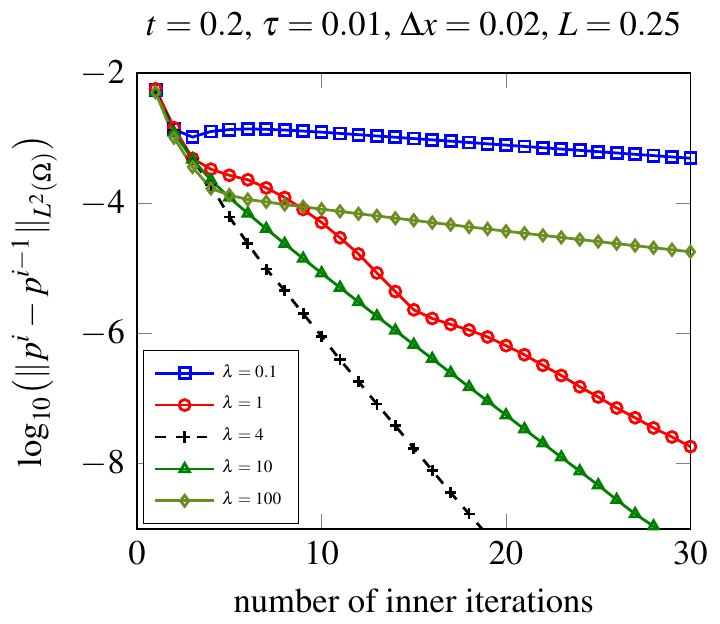}
    \caption{The decay of the pressure error in terms of  $\lambda$.}
    \label{figDDnum.lambda.err}
  \end{subfigure}~
  \begin{subfigure}[b]{.49\textwidth}
    \centering
    \includegraphics[width=\textwidth]{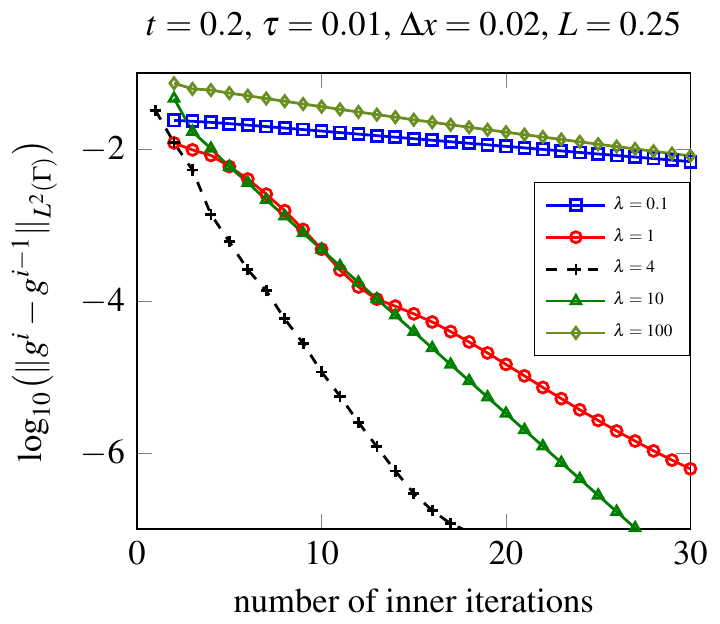}
    \caption{The decay of the $g$-error in terms of $\lambda$.}
    \label{figDDnum.lambda.g}
  \end{subfigure}
  \caption{The influence of $\lambda$ on the convergence rate. The parameters for the LDD scheme are $\tau=.01$, $\Delta x=0.02$,
$L_l=0.25$ at $t=0.2$.}
  \label{figDDnum.lambda}
\end{figure*}
Figure \ref{figDDnum.lambda} shows the influence of the parameter $\lambda$ on error characteristics. All the results
shown
are for the case with exact solution. Figure \ref{figDDnum.lambda.err} focuses on the errors $\norm{p^{n,i} - p^{n,i-1}}_{L^2(\dom)}$ 
\begin{wrapfigure}[12]{r}{0.47\textwidth}
  \centering
  \includegraphics[width=0.47\textwidth]{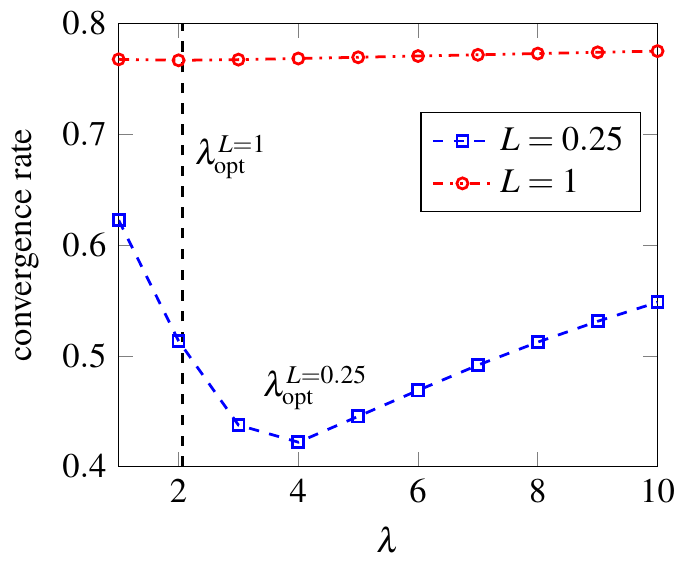}
  \caption{Convergence rate vs $\lambda$ for $L=0.25$ and $L=1$. For $L=0.25$, $\lambda_{\mbox{\scriptsize opt}} \approx 4$.}
  \label{figDDnum.optLambda}
\end{wrapfigure}
%
on the domain $\dom$, while Fig. \ref{figDDnum.lambda.g} depicts the $L^2$-errors $\norm{g^i -
g^{i-1}}_{L^2(\Gamma)}$ on the interface for the same time step. Clearly, $\lambda$ has tremendous impact on the
convergence rate. The convergence rate rapidly increases with $\lambda$ at first but after a certain point the
convergence rate starts decreasing. This trend is noticeable in both plots of Figure \ref{figDDnum.lambda}. This
indicates
that there is an optimal lambda $\lambda_{\mbox{\scriptsize opt}}$ for which the whole scheme has a fastest
convergence rate. The optimality  of $\lambda$ is actually a well studied behaviour in  the domain decomposition
literature. In \cite{LiZhen2008,QinXu2006} it has been shown that $\lambda_{opt}$  depends at least on mesh size and
sub-
domain size.
Later we will show that it also depends on $L_l$ and $\tau$ in our case.
This control over the convergence rate is the reason why the $\lambda$-formulation was chosen over the
convex-combination formulation given in Remark \ref{RemarkFormulationVariants}. To illustrate this, Fig.
\ref{figDDnum.eta} shows
the same plots as Figure \ref{figDDnum.lambda} but for the convex-combination formulation. In order to differentiate between plots
more easily, we use the combined formulation (\ref{strongiterschemeeqn2''}), (\ref{stronggliupdate''}) and set $M=1$. For
$\eta =0.01$
the convex-combination formulation even fails to converge. In all other cases the convergence is considerably slower.

\begin{figure*}[tp]
\begin{subfigure}[b]{.49\textwidth}
  \centering
  \includegraphics[width=\textwidth]{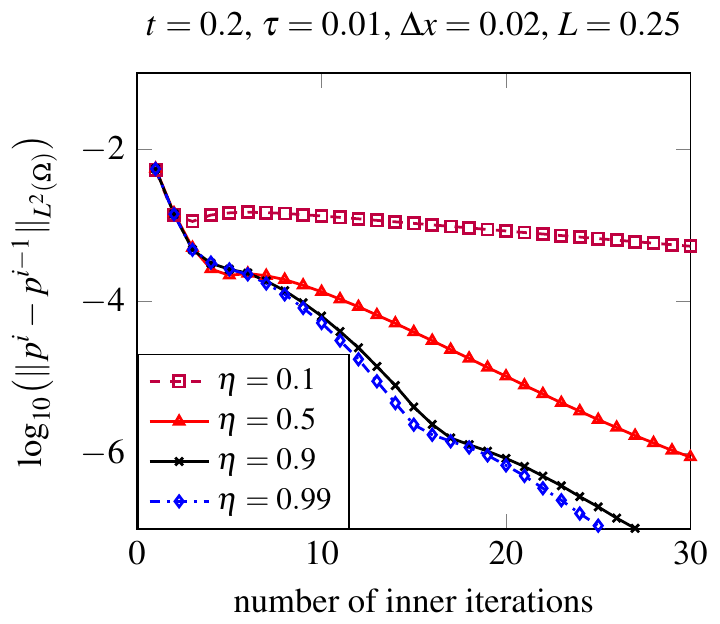}
  \caption{The decay of the pressure error in terms of $\eta.$}
  \label{figDDnum.eta.err}
\end{subfigure}
\begin{subfigure}[b]{.49\textwidth}
  \centering
  \includegraphics[width=\textwidth]{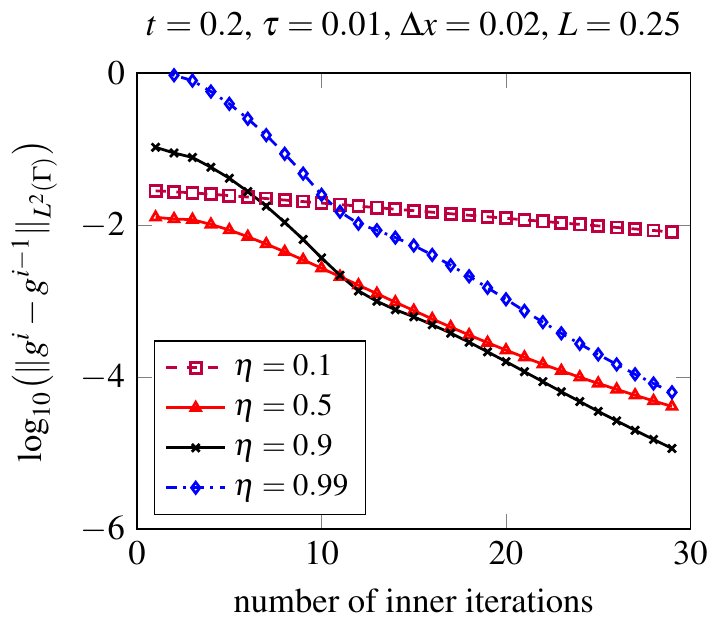}
  \caption{The decay of the $g$-error in terms of $\eta.$}
  \label{figDDnum.eta.g}
\end{subfigure}
  \caption{The influence of $\eta$ on the convergence rate in the convex-combination formulation ($M=1$ in Remark
  \ref{RemarkFormulationVariants}).
  The parameters for the LDD scheme are $\tau=0.01$, $\Delta x=0.02$, $L_l=0.25$ at $t=0.2$.}
  \label{figDDnum.eta}
\end{figure*}
\subsubsection*{The influence of \texorpdfstring{$L_l$}{Ll}}
\begin{wrapfigure}[12]{R}{0.48\textwidth}
  \vspace{-0.58cm}
  \centering
  \includegraphics[width=.48\textwidth]{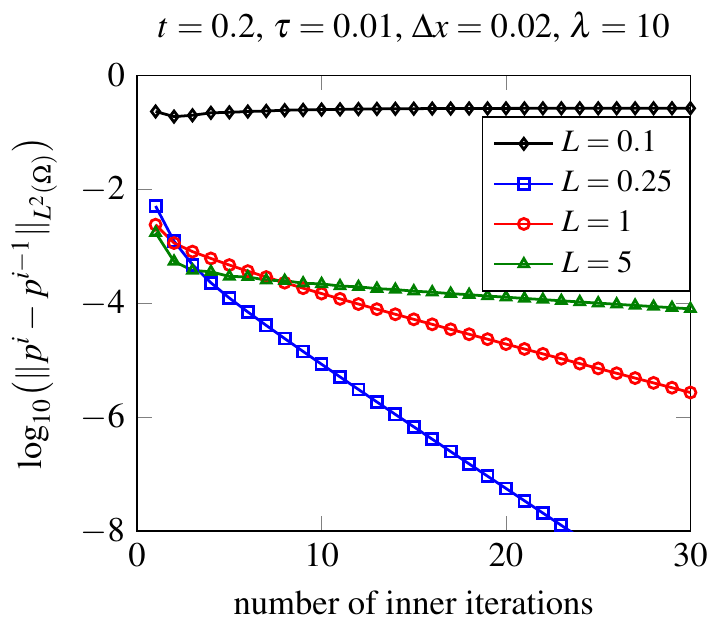}
  \caption{The influence of $L$ on the convergence rate, as obtained for the inner iterations for the $50^{th}$ time step. }
  \label{figDDnum.L}
\end{wrapfigure}
We briefly give an overview over the influence of $L_l$ on the convergence rate. Figure \ref{figDDnum.L} depicts this for
$L:=L_1=L_2$. For L-schemes it is common to diverge if $L$  is too small, which seems to be
the case for $L=0.1$. On the other hand, the convergence rate decreases significantly for very large $L$, a behaviour
that is a common trait of L-schemes as well, cf. \cite{Radu2004}. It is best to choose $L$ as small as possible, yet great
enough to ensure convergence of the scheme.  Note that $L_l=0$ represents the original (nonmodified) Picard iteration case and
Figure \ref{figDDnum.L} suggests that the original Picard scheme fails for these problems.

\subsubsection*{The dependence of \texorpdfstring{$\lambda_{\mbox{\scriptsize opt}}$}{lambdaopt} on
\texorpdfstring{$L_l$}{Ll}, \texorpdfstring{$\tau$}{tau} and \texorpdfstring{$\Delta x$}{Deltax}}
In this last section we investigate numerically how $\lambda_{\mbox{\scriptsize opt}}$
depends on the choice of $L$, $\tau$ and $\Delta x$. For a fixed grid in time and space
Table \ref{InfluenceOfParameters1} lists convergence rates for different $\lambda$ and $L$.
With this table we can guess the interval in which $\lambda_{\mbox{\scriptsize opt}}$
lies. Within this estimated interval, Fig. \ref{figDDnum.optLambda} shows how the
convergence rate varies with $\lambda$ for fixed $L$. For $L=0.25$, $\Delta x=0.02$ and
$\tau=0.01$ the fastest convergence is achieved for $\lambda=4$ (this is why $\lambda=4$
was chosen for the above comparisons, wherever the specified $L$, $\Delta x$, $\tau$ set was used).
The $\lambda$ dependence for higher values of $L$ is less pronounced.

For a fixed $L$, Tables \ref{InfluenceOfParameters2a}, \ref{InfluenceOfParameters2b} show the variance of
$\lambda_{\mbox{\scriptsize opt}}$ with respect to time-step and mesh size respectively.  The shown tables are of
course only a rough estimate of $\lambda_{\mbox{\scriptsize opt}}$. 
Due to computation time, it is a tedious process to
find a close to exact value of $\lambda_{\mbox{\scriptsize opt}}$, especially for very small time-step sizes. In
practice the values are numerically guessed.
The results indicate quite a strong correlation of $\lambda_{\mbox{\scriptsize opt}}$ with the time-step size,
contrasted by a rather minor correlation with the mesh size.
\begin{table}[htbp]
  \centering
\begin{tabular}{| c | c | c | c | c | c |} 
 \hline 
  $L$  &\textcolor{black}{$\lambda=0.1$} &\textcolor{black}{$\lambda=1$}
  &\textcolor{black}{$\lambda=10$}   &\textcolor{black}{$\lambda=100$} & $\lambda_{\mbox{\scriptsize opt}} \in $\\
  \hline
    0.1		& diverged & diverged & diverged & diverged & -         \\
    0.25 	& 0.9020   & 0.6223   & 0.5480   & 0.7721   & ($1$,$10$)\\
    1 		& diverged & 0.7675   & 0.7750   & 0.8138   & ($1$,$10$)\\
    5 		& diverged & 0.8993   & 0.8718   & 0.8708   & ($10$,$100$) \\
  \hline
\end{tabular}

  \caption{The dependence of the convergence rates on $\lambda$ and $L$: the geometric average of the contraction rates over the first 20
iterations and for different $(L,\lambda)$ pairs is given in the first columns, whereas the last gives the interval for $\lambda_{\mbox{\scriptsize opt}}$. Here, $\Delta x=0.02$,
$\Delta t=0.01$, $t=0.2$. }\label{InfluenceOfParameters1}
\end{table}
\begin{table}[htbp]
  \begin{subtable}{0.48\textwidth}
    \centering
     \begin{tabular}{| c | c | c | c |} 
 \hline 
  \textcolor{black}{$\Delta t $} &\textcolor{black}{0.1} &\textcolor{black}{0.01} &\textcolor{black}{0.001}\\
  Nr iter.?        &2   &4    &6   \\
  Avg. CR &0.4444 &0.4221 &0.5408 \\
  \hline
\end{tabular}

    \caption{$\lambda_{\mbox{\scriptsize opt}}$ for $\Delta x=0.02$, $L_l=0.25$}
    \label{InfluenceOfParameters2a}
  \end{subtable}
  \begin{subtable}{0.48\textwidth}
    \centering
    \begin{tabular}{| c | c | c | c | c |} 
  \hline 
  \textcolor{black}{$\Delta x $} &\textcolor{black}{0.1} &\textcolor{black}{0.05} &\textcolor{black}{0.02}
  &\textcolor{black}{0.01}\\
  Nr iter.?  &3    &4    &4    &4      \\%
  Avg. CR   &0.4398 &0.4270 &0.4221 &0.4221 \\
  \hline
\end{tabular}

    \caption{$\lambda_{\mbox{\scriptsize opt}}$ for $\Delta t=0.01$, $L_l=0.25$}
    \label{InfluenceOfParameters2b}
  \end{subtable}
  \caption{The dependence of $\lambda_{\mbox{\scriptsize opt}}$ on $\Delta t$ and on $\Delta x$.}
  \label{InfluenceOfParameters2}
\end{table} 

%

\section{Conclusion}
We considered a nonlinear parabolic problem appearing as mathematical model for variably saturated flow in porous media. For the numerical solution of the nonlinear, time discrete problems we proposed a combined scheme ($LDD$) that is based on a fixed point iteration (the $L$-scheme), and on a domain decomposition scheme involving Robin type coupling conditions at the interface separating different subdomains. The result is a scheme featuring the advantages of both approaches: an unconditional convergence, regardless of  time step and starting point, as well as a decoupling of the time discrete problems into subproblems that can be solved in parallel. The stability, robustness and efficiency of the method is tested for various cases and also compared to Newton and Picard schemes. The tests include situations where the latter diverge whereas the proposed scheme is converging. In summary, the key advantages of the method are:

\begin{itemize}
  \item The LDD scheme converges unconditionally. It can provide accurate results even in situations where the Picard or Newton iterations fail.
  \item In conjunction with a suitable space discretisation, it provides a decoupled, mass conservative approach. This is very useful in particular when dealing with
  models defined in media with block-type heterogeneities, where the material properties in different blocks may vary significantly.
    \item Though linearly convergent, the computational time required by the LDD scheme for achieving a certain accuracy of the approximation is comparable to the time needed by Newton and Picard schemes, and much faster than a standard L-scheme applied to the model in the entire domain. This efficiency is due to the fact that the scheme needs less time per inner iteration than a scheme defined in the entire domain. Moreover LDD is parallelisable, which gives the possibility of increasing its efficiency even further.
  \item The convergence rate of LDD schemes depends on the choice of $L$ and $\lambda$. With the optimal choice of parameters, the convergence order can be reduced significantly.
\end{itemize}

\section*{Acknowledgements}
{\itshape This work was supported by Netherlands Organisation for Scientific Research NWO Visitors Grant 040.11.499, Odysseus programme of the Research Foundation - Flanders FWO (G0G1316N), by UHasselt Special Research Fund project BOF17BL04, the Norwegian Research Council through the projects NRC 255510 (CHI) and NRC 255426 (IMMENS) as well as Shell-NWO/FOM CSER programme (project 14CSER016) and the German Research Foundation through IRTG 1398 NUPUS (project B17).}
\bibliography{articlebibliography}   
\end{document}